\documentclass{article}
\usepackage{geometry}
\usepackage{amsmath, amsthm, amsfonts, amssymb, enumitem, esint}
\usepackage[hidelinks]{hyperref}

\numberwithin{equation}{section}

\newtheorem{theorem}{Theorem}[section]
\newtheorem{lemma}[theorem]{Lemma}
\newtheorem{proposition}[theorem]{Proposition}

\theoremstyle{definition}

\theoremstyle{remark}
\newtheorem{remark}{Remark}[theorem]

\DeclareMathOperator{\diam}{diam}
\DeclareMathOperator{\dist}{dist}
\DeclareMathOperator{\e}{e}

\DeclareMathOperator{\Imm}{Imm}
\DeclareMathOperator{\Ort}{O}
\DeclareMathOperator*{\osc}{osc}
\DeclareMathOperator{\rank}{rank}
\DeclareMathOperator{\SO}{SO}
\DeclareMathOperator{\lspan}{span}

\DeclareMathOperator{\vol}{vol}

\title{\textbf{Asymptotic rigidity of codimension-1 isometric immersions via quantitative estimates}}
\author{\textbf{Mert Baştuğ}}
\date{}

\begin{document}

\maketitle

\begin{abstract}
    We offer an alternative approach to the asymptotic rigidity of codimension-1 isometric immersions via quantitative rigidity estimates. We show that an immersion between compact manifolds $M$ and $N$ of dimensions $d$ and $d + 1$, respectively, with small stretching plus bending energy is close to an isometric immersion. In this way, we recover the results of Alpern, Kupferman, and Maor \cite{AKM, AKM2}. In contrast to their intrinsic approach, we reduce the problem to the equidimensional Euclidean setting and apply the Friesecke–James–Müller \cite{FJM} rigidity estimate to obtain quantitative results. This yields an elementary proof based on Euclidean techniques. The rigidity estimates are of independent interest.
\end{abstract}

\section{Introduction}

A classical theorem due to Liouville says that a map $u \in C^1(U; \mathbb{R}^d)$, $U \subset \mathbb{R}^d$, satisfying $Du(x) \in \SO(d)$ for all $x \in U$ must have a constant derivative if $U$ is connected, that is, $Du \equiv R \in \SO(d)$. Liouville's theorem is the starting point for a large body of research concerning the rigidity of isometric maps. Such theorems play a crucial role in the theory of elasticity and differential geometry. An important generalization of Liouville's theorem is the celebrated Friesecke-James-Müller quantitative rigidity estimate \cite{FJM} (see also \cite[Section 2.4]{CS}). Let $p \in (1, \infty)$. It states that for all $u \in W^{1, p}(U; \mathbb{R}^d)$ defined on a Lipschitz domain $U \subset \mathbb{R}^d$, there exists $R \in \SO(d)$ such that
\begin{equation} \label{eq:fjm}
    \|Du - R\|_{L^p} \le C \|\dist(Du, \SO(d))\|_{L^p},
\end{equation}
where $C$ depends only on $U$ and $p$. The estimate \eqref{eq:fjm} was a milestone in the rigorous derivation of lower-dimensional models such as plate and shell theories \cite{FJMM, FJM, FJM2, MS}.

Questions of rigidity naturally lend themselves to generalization in the Riemannian setting, since a map $u \in C^1(U; \mathbb{R}^d)$ satisfying $Du(x) \in \SO(d)$ for all $x \in U$ is an isometric embedding of $U$ in $\mathbb{R}^d$. Such generalizations are not only interesting from a mathematical viewpoint but also appear in non-Euclidean theory of elasticity (see \cite[Section 1.3]{KMS} for some applications). Let $(M, g)$ and $(N, h)$ be compact, oriented, $d$-dimensional Riemannian manifolds. Kupferman, Maor and Schachar \cite{KMS} proved the asymptotic rigidity of isometric immersions from $M$ to $N$: If $u_k \in W^{1, p}(M; N)$ and
\[
\int_M \dist_{g, h}^p(d(u_k)_x, \SO((T_xM, g_x), (T_{u_k(x)}N, h_{u_k(x)})) \, d\vol_g(x) \to 0
\]
(see Section \ref{sec:notation} for an explanation of the notation), then, up to a subsequence, $(u_k)$ converges in $W^{1, p}$ to a smooth isometric immersion. A quantitative version was obtained recently for self-maps by Conti, Dolzmann and Müller \cite{CDM}. Essentially, they prove that for every $u \in W^{1, p}(M; M)$, there exists an orientation preserving isometry $\phi$ such that the $W^{1, p}$-distance between $u$ and $\phi$ is bounded by the ``elastic energy"
\[
\int_M \dist_g^p(du_x, \SO(T_xM, g_x)) \, d\vol_g(x).
\]
We refer the reader to \cite[Figure 1]{AKM2} for a summary of the literature. 

The rigidity property enjoyed by isometric embeddings between equidimensional Euclidean spaces fails when the codomain has higher dimension, as illustrated by unit-speed curves in $\mathbb{R}^2$. The reason is that the derivative reflects local stretching properties but does not capture the degree of bending in the ambient space. The latter information is encoded by the second fundamental form or the shape operator. In the following, we consider $M$ and $N$ to be $d$ and $(d + 1)$-dimensional, respectively. Given an immersion $u : M \rightarrow N$ and $x \in M$, let $\nu_u(x) \in T_{u(x)}N$ be the unit normal to $du_x(T_xM)$ that is consistent with the orientation induced by $du_x$. We denote by $S_u : TM \rightarrow TM$ the shape operator induced by $u$ (see Section \ref{sec:sobolev_immersions} for the definition). We consider an elastic energy that measures the bending as well as the stretching energies. We set
\begin{equation*}
    E(u) := \int_M \dist_{g, h}^p(du_x, \Ort((T_xM, g_x), (T_{u(x)}N, h_{u(x)}))) \, d\vol_g(x) + \int_M |du_x \circ (S_u(x) - S(x))|_{g, h}^p \, d\vol_g(x).
\end{equation*}
Here $S : TM \rightarrow TM$ is a reference shape operator on $M$. The natural class of functions on which the energy is well-defined is the space of Sobolev immersions given by
\[
\Imm_p(M; N) := \{u \in W^{1, p}(M; N) : \rank du_x = d \text{ for a.e. } x \in M, \, \nu_u \in W^{1, p}(M; TN)\}.
\]
Alpern, Kupferman, and Maor proved in \cite[Theorem 1.1]{AKM2} that for compact manifolds $M$ and $N$, any sequence $(u_k) \in \Imm_p(M; N)$ with $E(u_k) \to 0$ admits a subsequence converging in $W^{1, p}$ to a smooth isometric immersion $u \in \Imm_p(M; N)$. Moreover, the associated unit normals $(\nu_{u_k})$ converge to $\nu_u$ in $W^{1, p}$, and $S = S_u$. This result builds on their earlier work \cite{AKM}, where the target manifold $N$ was additionally assumed to have constant sectional curvature.

The purpose of this work is to provide an alternative proof of the asymptotic rigidity theorem for codimension-1 isometric immersions based on local quantitative rigidity estimates. In comparison to the Young measure approach used in \cite{AKM2}, we rely only on the FJM estimate \eqref{eq:fjm} and elementary methods. We briefly describe the main idea behind our local rigidity estimate in the special case $N = \mathbb{R}^{d + 1}$. Let $u$ be a Sobolev immersion with small elastic energy $E(u)$. Then the small bending energy implies that the normal field $\nu_u$ varies little on sufficiently small scales. As a consequence, the image of $u$ is locally close to a $d$-dimensional affine subspace. Since $M$ is locally Euclidean, this allows us to view $u$, after restriction to small regions, as an approximately equidimensional map. The Euclidean rigidity estimate then shows that $u$ is locally well approximated by a rotation. When $N$ is an arbitrary compact manifold, we embed it isometrically in a Euclidean space. See Theorem \ref{thm:codimension1_compact_rigidity} for the precise statement of the result. Once we have the local rigidity estimate, we can recover the asymptotic rigidity result by a compactness argument similar to \cite[Theorem 4.1]{FJM}.

In a companion paper \cite{B2}, we extend the result by Alpern, Kupferman and Maor to complete manifolds $N$, dropping the compactness assumption. In this setting, it becomes more natural to work with the intrinsic definition of manifold-valued Sobolev maps, which requires technical refinements of the previous argument. The approach through embedding the target manifold isometrically into a Euclidean space offers a simplification and yields an elementary proof of Alpern, Kupferman and Maor. The rigidity estimate is also of independent interest.

The paper is organized as follows. Section \ref{sec:preliminaries} introduces some basic notation and the main class of functions we work with, the Sobolev immersions. In Section \ref{sec:local_rigidity}, we prove the local quantitative rigidity estimate stated in Theorem \ref{thm:codimension1_compact_rigidity}. In the final section of the paper, we recover the result by Alpern, Kupferman and Maor, given in Theorem \ref{thm:asymptotic_rigidity}.

\section{Preliminaries} \label{sec:preliminaries}

In this section, we define the main space of functions we work with. The first subsection introduces notation that we use frequently in the paper.

\subsection{Notation} \label{sec:notation}

Let $V$ be a vector space endowed with an inner product or, equivalently, a constant metric $g_0$. If $v, v' \in V$, then we simply write $(v, v')_{g_0}$ for $g_0(v, v')$ and $|v|_{g_0}$ for $\sqrt{g_0(v, v)}$. Let $W$ be another vector space with a constant metric $h_0$. The set of linear maps from $V$ to $W$ is denoted by $\mathcal{L}(V, W)$. We denote the subset of isometries in $\mathcal{L}(V, W)$ by $\Ort((V, g_0), (W, h_0))$, and the set of orientation preserving isometries is denoted by $\SO((V, g_0), (W, h_0))$. Let $(v_1, \dots, v_d)$ be an orthonormal basis of $V$. We define the Frobenius norm of $T \in \mathcal{L}(V, W)$ by
\[
|T|_{g_0, h_0} := \left(\sum_{i = 1}^d |Tv_i|_{h_0}^2\right)^\frac{1}{2}.
\]
If $(v_1', \dots, v_d')$ is another orthonormal basis, then
\[
\sum_{i = 1}^d |Tv_i|_{h_0}^2 = \sum_{i = 1}^d \sum_{j = 1}^d |(v_i, v_j')_{g_0}|^2 |Tv_j'|_{h_0}^2 = \sum_{j = 1}^d |Tv_j'|_{h_0}^2 \sum_{i = 1}^d |(v_i, v_j')_{g_0}|^2 = \sum_{j = 1}^d |Tv_j'|_{h_0}^2.
\]
Hence, the Frobenius norm is well-defined. If $V = W$ and $g_0 = h_0$, then we simply write $|T|_{g_0}$. If $\mathcal{T}, \mathcal{S} \subset \mathcal{L}(V, W)$, then we define their distance by
\[
\dist_{g_0, h_0}(\mathcal{T}, \mathcal{S}) := \inf \{|T - S|_{g_0, h_0} : T \in \mathcal{T}, S \in \mathcal{S}\}.
\]
If $V_1, V_2$ are orthogonal subspaces of $V$ and $V_1 + V_2 = V$, then we write $V = V_1 \oplus V_2$. In this case, $V_2$ is the orthogonal complement of $V_1$, written as $V_1^\perp = V_2$.

We denote the standard basis vectors in $\mathbb{R}^d$ by $(e_1, \dots, e_d)$. The Euclidean metric on $\mathbb{R}^d$ is denoted by $\e_d$. In the notation introduced above, we suppress the metric when all relevant vector spaces are Euclidean and write $(v, v')$, $|T|$, $\dist(\mathcal{T}, \mathcal{S})$ etc. Let $g_0$ and $g_0'$ be constant metrics on $\mathbb{R}^d$. We define their distance by
\[
|g_0 - g_0'| := \left(\sum_{i, j = 1}^d |(e_i, e_j)_{g_0} - (e_i, e_j)_{g_0'}|^2\right)^\frac{1}{2}.
\]
If $h_0$ is a constant metric on $\mathbb{R}^n$, then we use the short-hand notations $\Ort(g_0, h_0)$ and $\SO(g_0, h_0)$ instead of $\Ort((\mathbb{R}^d, g_0), (\mathbb{R}^n, h_0))$ and $\SO((\mathbb{R}^d, g_0), (\mathbb{R}^n, h_0))$. When $g_0 = h_0$, the notation is further shortened to $\Ort(g_0)$ and $\SO(g_0)$.

Let $(M, g)$ and $(N, h)$ be Riemannian manifolds. We denote the differential of a smooth map $f : M \rightarrow N$ by $df : TM \rightarrow TN$. If $N = \mathbb{R}^n$, then we can identify $T\mathbb{R}^n$ with $\mathbb{R}^{2n}$. Hence, for every $x \in M$, there exists a map $Df(x) : T_xM \rightarrow \mathbb{R}^d$ such that
\[
df_x(v) = (f(x), Df(x)v).
\]
We call $Df : TM \rightarrow \mathbb{R}^n$ the total derivative of $f$. When $f$ is only weakly differentiable, the notations $df$ and $Df$ are used in analogy with the smooth case. We denote the volume form on $M$ by $\vol_g$.

\subsection{Sobolev immersions} \label{sec:sobolev_immersions}

The following definitions are based on \cite[Sections 2.4, 2.5]{AKM}. Let $(M, g)$ be a $d$-dimensional oriented Riemannian manifold and let $(N, h)$ be a $(d + 1)$-dimensional oriented compact Riemannian manifold. Let $1 < p < \infty$. By Nash's embedding theorem, there exists a smooth isometric embedding $\iota : N \rightarrow \mathbb{R}^D$ for some $D \ge d + 1$. We write $u \in W^{1, p}(M; N)$ if $\iota \circ u \in W^{1, p}(M; \mathbb{R}^D)$ and we set $\bar{u} := \iota \circ u$. The compactness of $N$ ensures that the definition of $W^{1, p}(M; N)$ is independent of the embedding $\iota$. It is not difficult to show that for a.e. $x \in M$, $d\bar{u}_x(T_xM)$ is contained in $d\iota_{u(x)}(T_{u(x)}N)$. This allows us to define a weak differential $du_x : T_xM \rightarrow T_{u(x)}N$ for a.e. $x \in M$ by setting $du_x := (d\iota_{u(x)})^{-1} \circ d\bar{u}_x$. The definition of the weak differential does not depend on $\iota$. Throughout the paper, we do not identify Sobolev functions that agree almost everywhere. It is possible to define manifold-valued Sobolev maps intrinsically as well (see \cite{CVS}). This is the approach we take in \cite{B2}.

If $u \in W^{1, p}(M; N)$ and $\rank du_x = d$, then there exists a unique vector $\nu_u(x) \in T_{u(x)}N$ satisfying the following conditions:
\begin{enumerate}
    \item $|\nu_u(x)|_h = 1$,
    \item $(\nu_u(x), du_x(v))_h = 0$ for all $v \in T_xM$,
    \item if $(v_1, \dots, v_d)$ is a positively oriented basis of $T_xM$, then $(du_x(v_1), \dots, du_x(v_d), \nu_u(x))$ is a positively oriented basis of $T_{u(x)}N$.
\end{enumerate}
We call $\nu_u(x)$ the oriented unit normal at $x$. If $\rank du_x < d$, we set $\nu_u(x) := 0 \in T_{u(x)}N$. We define $\bar{\nu}_u(x) := D\iota(u(x))(\nu_u(x))$. The space of Sobolev immersions is given by
\[
\Imm_p(M; N) := \{u \in W^{1, p}(M; N) : \rank du_x = d \text{ for a.e. } x \in M, \, \bar{\nu}_u \in W^{1, p}(M; \mathbb{R}^D)\}.
\]
Given $u \in \Imm_p(M; N)$, we let $P_u(x)$ be the orthogonal projection of $\mathbb{R}^D$ onto the subspace that is canonically isomorphic to $T_{\bar{u}(x)}\iota(N)$. Differentiating the identity $(\bar{\nu}_u(x), \bar{\nu}_u(x)) = 1$ shows that $\bar{\nu}_u(x)$ is orthogonal to $D\bar{\nu}_u(x)(T_xM)$ for a.e. $x \in M$. Thus, $P_u(x) \circ D\bar{\nu}_u(x)$ maps $T_xM$ into $D\bar{u}(x)(T_xM)$ for a.e. $x \in M$. Consequently, there exists a unique linear map $S_u(x) : T_xM \rightarrow T_xM$ satisfying
\begin{equation} \label{eq:compact_shape_operator}
    D\bar{u}(x) \circ S_u(x) = P_u(x) \circ D\bar{\nu}_u(x).
\end{equation}
We call $S_u : TM \rightarrow TM$ the induced shape operator.

If we view $u \in \Imm_p(M; N)$ as a deformation of $M$, then we can measure its stretching and bending energies with the following functionals:
\begin{align}
    E_s(u) &:= \int_M \dist_{g, h}^p(du_x, \Ort((T_xM, g_x), (T_{u(x)}N, h_{u(x)}))) \, d\vol_g(x), \\
    E_b(u) &:= \int_M |du_x \circ S_u(x)|_{g, h}^p \, d\vol_g(x) = \int_M |P_u(x) \circ D\bar{\nu}_u(x)|_{g, \e_D}^p \, d\vol_g(x).
\end{align}
The integrand of $E_s(u)$ quantifies how far $du_x$ is from being length preserving, whereas the integrand of $E_b(u)$ measures how much the deformation of $M$ curves within the ambient space $N$. If $M$ has a pre-assigned ``shape", then we can also measure the deviation from the original shape after the deformation. To make this precise, we assume that $M$ is equipped with a smooth symmetric 2-tensor field $b$. For each $x \in M$ we let $S(x) : T_xM \rightarrow T_xM$ be the unique linear map defined by
\begin{equation} \label{eq:reference_shape_operator}
    (S(x)v, w)_{g_x} = b(v, w) \quad \text{for all } v, w \in T_xM.
\end{equation}
We call the resulting map $S : TM \rightarrow TM$ the reference shape operator. The modified bending energy is given by
\begin{equation}
    E_b^S(u) := \int_M |du_x \circ (S_u(x) - S(x))|_{g, h}^p \, d\vol_g(x).
\end{equation}
Clearly, $E_b(u) = E_b^S(u)$ if $S \equiv 0$. Finally, we also introduce the quantity
\begin{equation} \label{eq:excess_energy}
    \mathcal{E}(u) := E_b(u) + \int_M |du_x|_{g, h}^p \, d\vol_g(x),
\end{equation}
which will appear frequently in our estimates.

\section{Local quantitative rigidity estimate} \label{sec:local_rigidity}

We fix an open and bounded cube $Q \subset \mathbb{R}^d$ endowed with a metric $g$ such that
\begin{equation} \label{eq:comparable_metrics}
    \frac{1}{\lambda} \e_d \le g \le \lambda \e_d \quad \text{in } Q
\end{equation}
for some $\lambda > 0$. Let $(N, h)$ be an oriented compact manifold of dimension $d + 1$ and fix a smooth isometric embedding $\iota : N \rightarrow \mathbb{R}^D$. We also fix $p \in (1, \infty)$. In this section, we prove the following quantitative rigidity result for functions in $\Imm_p(Q; N)$. 

\begin{theorem} \label{thm:codimension1_compact_rigidity}
    Let $u \in \Imm_p(Q; N)$. There exist $x_0 \in Q$ and $R \in \Ort(g_{x_0}, \e_D)$ such that
    \[
    \int_Q |D\bar{u} - R|_{g, \e_D}^p \, dx \le C \left(|Q| \left(\osc_Q g\right)^p + E_s(u) + \diam^p(Q) \mathcal{E}(u)\right).
    \]
    The constant $C$ depends only on $d$, $p$, $\lambda$, $N$ and $\iota$.
\end{theorem}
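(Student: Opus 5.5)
Write $\ell = \diam(Q)$. The plan is to reduce to the Euclidean equidimensional FJM estimate \eqref{eq:fjm} by extending $D\bar u$ by the normal direction(s). Let $A(x) := D\bar u(x)$ be the $D\times d$ matrix, viewed through the coordinates of $Q$. Let $\bar\nu(x) := \bar\nu_u(x)$, and let $n_1(x),\dots,n_{D-d-1}(x)$ be an orthonormal frame of the normal space of $\iota(N)$ at $\bar u(x)$.

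The first difficulty is that no global smooth normal frame of $\iota(N)$ need exist. I would handle this by working with the projection $P^\perp(y) = I - P(y)$ onto the normal space of $\iota(N)$, which is smooth on $\iota(N)$. I would then replace the missing frame by the projection itself. Concretely, consider the $D\times D$ field
\[
F(x) := \bigl[A(x)\,|\,\bar\nu(x)\bigr]\oplus P^\perp(\bar u(x)),
\]
interpreted as a linear map $\mathbb{R}^d\oplus\mathbb{R}\oplus\mathbb{R}^D \to \mathbb{R}^D$. Instead of applying FJM to $F$ itself, I would apply it to a genuine gradient:
\[
w(x,t,s) := \bar u(x) + t\,\bar\nu(x) + P^\perp(\bar u(x))\,s,
\]
defined on $Q\times(0,\ell)\times(0,\ell)^{D-d-1}$. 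Here $s$ is fixed in a reference normal space near a base point, after a rotation so that the normal space is approximately $\mathbb{R}^{D-d-1}$ on $Q$. Differentiating gives
\[
Dw = \bigl[A + t\,D\bar\nu + s\cdot D(P^\perp\circ\bar u)\,\bigm|\,\bar\nu\,\bigm|\,P^\perp(\bar u)\bigr].
\]
The extra terms $t\,D\bar\nu$ and $D(P^\perp\circ\bar u)$ are bounded pointwise by $\ell\,|D\bar\nu|$ and $\ell\,C_\iota|A|$ respectively. The normal part of $D\bar\nu$ is controlled by the second fundamental form of $\iota$ times $|A|$. Its tangential part is exactly $P_u D\bar\nu$, which is the bending integrand. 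Hence
\[
\int |Dw - [A|\bar\nu|P^\perp]|^p \lesssim \ell^p\,\mathcal E(u)
\]
per unit transverse volume. This is where the $\diam^p(Q)\,\mathcal E(u)$ term comes from.

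Next I control the distance to $\SO(D)$. First freeze the metric, setting $g_0 = g_{x_0}$ for some $x_0\in Q$, and compose with $g_0^{-1/2}$ so that Euclidean FJM applies. Then $\dist([A g_0^{-1/2}\,|\,\bar\nu\,|\,\text{normal frame}],\Ort(D))$ is bounded by $\dist_{g,\e_D}(A,\Ort(g,\e_D)) + C|g-g_0||A|$. This follows because $\bar\nu$ and the normal space are exactly orthonormal to the image of $A$ and to each other. The metric freezing produces the term $(\osc_Q g)^p$ times $|A|^p$. I would bound this part by splitting $|A|\le C(1+\dist(A,\Ort))$ to obtain $|Q|(\osc g)^p + E_s$. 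Orientation is fixed by the orientation convention for $\nu_u$, so the target is $\SO$.

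Apply \eqref{eq:fjm} on the cube $Q\times(0,\ell)^{D-d}$, which is Lipschitz with a scale-invariant constant. This yields $R'\in\SO(D)$ with
\[
\|Dw-R'\|_p^p \le C\bigl(\ell^{D-d}\bigr)\bigl(|Q|(\osc g)^p + E_s + \ell^p\mathcal E\bigr).
\]
Restricting to the first $d$ columns and averaging over the transverse variables, which removes the factor $\ell^{D-d}$, gives $R = R'|_{\mathbb{R}^d}\,g_0^{1/2}\in\Ort(g_{x_0},\e_D)$. Then
\[
\int_Q|A-R|^p \le C\bigl(|Q|(\osc g)^p+E_s+\ell^p\mathcal E\bigr),
\]
after converting norms via \eqref{eq:comparable_metrics}.

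The main obstacle is the frame issue: making the normal bundle of $\iota(N)$ along $\bar u(Q)$ trivial in a $W^{1,p}$ way. The image $\bar u(Q)$ may not lie in a small ball, since no smallness is assumed. If it fails to, I would use a fallback. In that case, the right-hand side of the estimate dominates $\int_Q|A|^p$, which is $\gtrsim |Q|$ when $E_s$ is small. Choosing $R$ arbitrary then makes the estimate trivial, since $\int|A-R|^p\lesssim \int|A|^p+|Q|$. Similarly, when $\ell^p\mathcal E$ or $E_s$ is comparable to $|Q|$ the estimate is trivial. Otherwise, a Poincaré argument shows that $\bar u$ has oscillation $\lesssim \ell$ on most of $Q$, so the image lies in a small neighborhood where a smooth frame exists with constants depending only on $N,\iota$. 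Making this case split rigorous (the Poincaré/oscillation argument for a Sobolev map in $d\ge2$, where $W^{1,p}$ need not embed into $C^0$) is the step I expect to require the most care. If needed, I would instead use the projection-based extension $P^\perp(\bar u)s$ with $s\in\mathbb{R}^D$ and FJM on $\mathbb{R}^{d+1+D}$ in a degenerate form, which avoids choosing a frame.
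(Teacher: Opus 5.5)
Your thickening route is genuinely different from the paper's. The paper stays in dimension $d$: it projects $\bar u$ onto a well-chosen tangent plane $\Pi_{x_0}$, applies Theorem \ref{thm:noneuclidean_rigidity} to $T^{-1}P_{\Pi_{x_0}}\bar u$, and pays for the projection with $\int_Q|\Pi_x^\perp-\Pi_{x_0}^\perp|^p\,dx$. Your handling of the terms $t\,D\bar\nu$ and $s\cdot D(P^\perp\circ\bar u)$, the metric freezing, the orientation and the transverse averaging is fine.

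The gap is exactly the step you flag, and neither way out you sketch works. Since $W^{1,p}$ maps need not be continuous for $p\le d$, the dichotomy ``$\bar u(Q)$ lies in a small ball, or the right-hand side is $\gtrsim|Q|$'' is not exhaustive. For $p<d$, say, a thin finger of length $O(1)$ grafted onto $\bar u(Q)$ over a ball of radius $\rho$ costs only $O(\rho^{d-p})$ in $E_s$, $E_b$ and $\int_Q|D\bar u|^p$. Your refinement ``on most of $Q$'' leaves the exceptional set untreated. The ``degenerate FJM on $\mathbb{R}^{d+1+D}$'' has no content: FJM concerns square gradients near rotations, while $P^\perp(\bar u)$ on all of $\mathbb{R}^D$ has a $(d+1)$-dimensional kernel.

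The missing observation is that no trivialization is needed, because FJM only requires $\dist(Dw,\SO(D))$ to be small in $L^p$, not uniformly. Fix $q_0=\bar u(x_0)$ and a positively oriented orthonormal basis $E_0$ of $V_{q_0}^\perp$. Then use the globally defined map $w=\bar u+t\bar\nu+P^\perp(\bar u)E_0s$. If $N(x)$ is the positively oriented orthonormal basis of $V_{\bar u(x)}^\perp$ closest to $E_0$, then
\[
|P^\perp(\bar u(x))E_0-N(x)|\le|E_0-N(x)|=|V_{\bar u(x)}^\perp-V_{q_0}^\perp|\le C|\bar u(x)-q_0|.
\]
The last bound holds on all of $\iota(N)$ by local Lipschitz continuity of the oriented normal spaces, boundedness of this distance, and compactness. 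Let $R_0$ be the polar factor of $Ag_0^{-1/2}$. Since $[R_0|\bar\nu|N(x)]\in\SO(D)$, the completion error is just one more additive term $C|\bar u(x)-q_0|$ in $\dist(Dw,\SO(D))$. Choose $x_0$ by Chebyshev; Poincaré then gives $\int_Q|\bar u-\bar u(x_0)|^p\,dx\le C\ell^p\int_Q|D\bar u|^p\,dx\le C\ell^p\mathcal{E}(u)$. This is precisely the paper's estimate \eqref{eq:planar_variation}, and your argument then closes.

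Compared with the paper, building the orientation in through $\nu_u$ and the oriented normal bundle spares you Lemma \ref{lm:stability_of_orientation_preservation} and the separate case $|\Pi_x^\perp-\Pi_{x_0}^\perp|\ge\varepsilon$. The variation of $\bar\nu$ is then controlled pointwise by $\ell|D\bar\nu|$ instead of by Poincaré. The price is FJM in dimension $D$ on a thickened domain, plus the extra derivative terms.
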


Our proof is based on the celebrated Euclidean rigidity estimate due to Friesecke, James and Müller \cite{FJM} (see \cite[Section 2.4]{CS} for $p \neq 2$).

\begin{theorem} \label{thm:euclidean_rigidity}
    Let $U \subset \mathbb{R}^d$ be a Lipschitz domain and let $u \in W^{1, p}(U; \mathbb{R}^d)$. Then there exists $R \in \SO(\e_d)$ such that
    \begin{equation} \label{eq:friesecke_james_müller}
        \int_U |Du - R|^p \, dx \le C_U \int_U \dist^p(Du, \SO(\e_d)) \, dx
    \end{equation}
    with $C_U$ depending only on $p$ and $U$.
\end{theorem}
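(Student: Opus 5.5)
Since this is the Friesecke--James--M\"uller estimate, I would follow the original strategy of \cite{FJM}, with the $L^p$ modifications of \cite[Section 2.4]{CS}. Write $\delta := \int_U \dist^p(Du, \SO(\e_d))\,dx$. The estimate is invariant under the scaling $u \mapsto r^{-1}u(r\,\cdot)$ and $U \mapsto r^{-1}U$. So I first prove it for a unit cube $Q$, then pass to the general Lipschitz domain by a covering argument. The plan has three steps: reduction to Lipschitz maps, an interior estimate on cubes via harmonic approximation, and globalization via a Whitney decomposition.

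\emph{Step 1: Lipschitz truncation.} Since $\dist(F, \SO(\e_d)) \ge |F| - \sqrt d$, the set where $|Du|$ exceeds a large multiple $\Lambda$ of $\sqrt d$ has measure at most $C\delta$. It also carries $\int |Du|^p \le C\delta$ there. By the Lipschitz truncation lemma (maximal function argument) there is $v$ with $\|Dv\|_\infty \le C(d,U)\Lambda$ and $\int_U |Du - Dv|^p \le C\delta$. Consequently $\int \dist^p(Dv, \SO) \le C\delta$, and it suffices to prove the estimate for $v$. Boundedness of $Dv$ lets me interchange $L^2$ and $L^p$ norms of $\dist(Dv,\SO)$ at the cost of constants, since $\dist \le C$.

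\emph{Step 2: interior estimate on a cube.} This is the heart of the argument and the step I expect to be the main obstacle. The key identity is the Piola identity $\operatorname{div}\operatorname{cof} Dv = 0$ together with the pointwise bound $|\operatorname{cof}F - F| \le C\dist(F,\SO(\e_d))$ for bounded $F$; the bound holds because $\operatorname{cof}R = R$ for $R \in \SO(\e_d)$. Hence $\Delta v = \operatorname{div}(F - \operatorname{cof}F)$ with $F = Dv$. I split $v = w + z$, where $z$ solves $\Delta z = \operatorname{div}(Dv - \operatorname{cof}Dv)$ with zero boundary data on $Q$. By Calder\'on--Zygmund theory $\|Dz\|_{L^p} \le C\|\dist(Dv,\SO)\|_{L^p}$, and $w$ is harmonic. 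Then $\int_Q \dist^p(Dw,\SO) \le C\delta$. Since $Dw$ is harmonic and bounded in $L^p$, mean value estimates make $\dist(Dw,\SO)$ uniformly small on an interior cube $Q'$, with $\sup_{Q'}\dist^p(Dw,\SO) \le C\delta$.

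Still in Step 2, I then use the nonlinear rigidity of harmonic gradients with values near $\SO$. The function $\Phi(F) = |\operatorname{cof}F - F|^2$ (or $|F|^2 - $ suitable null Lagrangian combinations) is quadratic modulo null Lagrangians and vanishes on $\SO$. Integrating $|D^2 w|^2$ against a cutoff, using $\Delta \tfrac12|Dw|^2 = |D^2w|^2$ and the fact that $\det$-type minors are null Lagrangians, yields $\int_{Q''}|D^2 w|^p \le C\delta$. Poincar\'e then gives a constant matrix $A$ with $\int_{Q''}|Dw - A|^p \le C\delta$. Combined with the bound on $Dz$, this gives $\int_{Q''}|Dv - A|^p \le C\delta$, and $\dist(A,\SO)^p \le C\delta$, so $A$ may be replaced by some $R \in \SO(\e_d)$. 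This harmonic-approximation step (the second-derivative estimate, and for $p \neq 2$ the passage from the $L^2$ computation to $L^p$ via boundedness) is the technically delicate point, and it is where I would follow \cite{FJM} most closely.

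\emph{Step 3: from interior cubes to $U$.} I cover $U$ by a Whitney decomposition into cubes $Q_i$ with side comparable to the distance to $\partial U$. Applying Step 2 to each slightly enlarged cube gives rotations $R_i$ with $\int_{Q_i}|Dv - R_i|^p \le C\int_{\hat Q_i}\dist^p$. I then define a piecewise constant, smoothed field $\tilde R$. Its gradient is controlled by differences $|R_i - R_j|$ of neighbouring cubes, and these are bounded by the local energies divided by the side length. A weighted Poincar\'e inequality with weight $\dist(x,\partial U)$, valid on Lipschitz domains, gives a single $R$ with $\int_U |\tilde R - R|^p \le C\delta$. Finally, $\dist(\,\cdot\,,\SO)$ controls the projection of $R$ back onto $\SO(\e_d)$, which yields $\int_U |Du - R|^p \le C_U\delta$.
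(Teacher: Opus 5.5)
The paper gives no proof of this statement. It quotes it from \cite{FJM}, and from \cite[Section 2.4]{CS} for $p\neq 2$. Your architecture (Lipschitz truncation, harmonic approximation via $\operatorname{div}\operatorname{cof}Dv=0$, Whitney cubes with a weighted Poincar\'e inequality) is indeed that of \cite{FJM}. However, two of the steps you fill in yourself would fail as written.

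First, an $L^\infty$ bound on $\dist(Dv,\SO(d))$ does not let you trade $L^2$ for $L^p$ at the cost of constants. It gives $\int\dist^2\le C\int\dist^p$ for $p<2$ and $\int\dist^p\le C\int\dist^2$ for $p>2$. These compare $\|\cdot\|_{L^2}^2$ with $\|\cdot\|_{L^p}^p$, not the norms, so the homogeneity is wrong. From $\int|Dv-R|^2\le C\int\dist^2$ you therefore only get $\int|Dv-R|^p\le C\delta^{p/2}$ for $p<2$, and $\int|Dv-R|^p\le C\delta^{2/p}$ for $p>2$. Both are far weaker than $C\delta$ for small $\delta$, and the conversions are sharp: take $\dist=1$ on a set of measure $\mu$, respectively $\dist\equiv\varepsilon$. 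The case $p\neq2$ has to be run with $L^p$ estimates in every step, which is why it is treated separately in \cite[Section 2.4]{CS}.

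Second, and this is the heart of the matter, the interior estimate for the harmonic part has two problems.

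\emph{The pointwise bound.} Since $\dist(\cdot,\SO(d))$ is not convex, averaging does not turn $\int_Q\dist^p(Dw,\SO(d))\le C\delta$ into $\sup_{Q'}\dist^p(Dw,\SO(d))\le C\delta$. Together with the interior bound on $D^2w$, you only get $\sup_{Q'}\dist(Dw,\SO(d))\le C\delta^{1/(d+p)}$. That uniform smallness is all you actually need.

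\emph{The mechanism for $\int_{Q''}|D^2w|^p\le C\delta$ does not deliver it.} Already for $d=2$, every quadratic polynomial in $F$ that vanishes to second order on $\SO(2)$ vanishes on all conformal matrices. For example, $|\operatorname{cof}F-F|^2=4|F^-|^2$, where $F^-$ is the anticonformal part. Such an integrand is blind to holomorphic maps like $w(z)=z+\varepsilon z^2$, for which $D^2w\neq0$. Integrands vanishing only to first order do not help either. For $|F|^2-d$, the identity $\Delta\tfrac12|Dw|^2=|D^2w|^2$ gives $\int\eta|D^2w|^2=\tfrac12\int\Delta\eta\,(|Dw|^2-d)\le C\|\dist(Dw,\SO(d))\|_{L^1}$. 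This is linear in $\dist$, which is the wrong scaling.

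\emph{The missing idea} is a linearization about $\SO(d)$ that exploits the uniform smallness.
\begin{itemize}
\item $f=\dist^2(\cdot,\SO(d))$ is smooth near $\SO(d)$, with $D^2f(R)[H,H]=2|\operatorname{sym}(R^TH)|^2$.
\item Because $\partial_j\partial_k w$ is symmetric in $j,k$, the algebraic Korn lemma gives $\sum_k|\operatorname{sym}(R^T\partial_kDw)|^2\ge c|D^2w|^2$.
\item Hence $\Delta f(Dw)=\sum_kD^2f(Dw)[\partial_kDw,\partial_kDw]\ge\tfrac c2|D^2w|^2$ wherever $\dist(Dw,\SO(d))$ is small.
\item Testing with a cutoff yields $\int_{Q''}|D^2w|^2\le C\int_{Q'}\dist^2(Dw,\SO(d))$.
\end{itemize}
This settles $p=2$. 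For $p\neq2$ this step too needs a genuine $L^p$ counterpart rather than interpolation via $L^\infty$ bounds.
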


\begin{remark} \label{rmk:bilipschitz_deformation}
    It is easy to see that the rigidity constant $C_U$ is invariant under scaling and translations. Furthermore, if $T : U \rightarrow V \subset \mathbb{R}^d$ is a bi-Lipschitz map, then $C_V$ can be estimated in terms of $C_U$ and the Lipschitz constants of $T$ and $T^{-1}$ \cite[Theorem 2.3]{LP}.
\end{remark}

We would like to generalize Theorem \ref{thm:euclidean_rigidity} to domains endowed with non-Euclidean metrics. In order to do this, we need to add pointwise dependence to the set of rotations in the bounding term, since the rotations depend on the metric. We shall do this with a simple error estimate.

\begin{lemma} \label{lm:distance_between_sets_of_oriented_matrices}
    Let $x, y \in Q$. Then
    \[
    \dist(\SO(g_x, \e_d), \SO(g_y, \e_d)) \le \frac{\sqrt{\lambda}}{2}|g_x - g_y|.
    \]
\end{lemma}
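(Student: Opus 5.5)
Fix $x, y \in Q$ and write $G_x, G_y$ for the symmetric positive definite matrices of $g_x, g_y$ in the standard basis, so that $(v,w)_{g_x} = v^T G_x w$. The plan is to exhibit one explicit element of each set and bound the distance between them. First characterize $\SO(g_x, \e_d)$: a matrix $A$ lies in it iff $A^T A = G_x$ and $\det A > 0$. The natural candidate is therefore the symmetric square root $A_x := G_x^{1/2}$, which has positive determinant and satisfies $A_x^T A_x = G_x$. Similarly $A_y := G_y^{1/2} \in \SO(g_y, \e_d)$.

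Next, the distance is measured in the Frobenius norm $|\cdot|_{g, \e_d}$ relative to the domain metric. Strictly speaking the two sets live over different domain metrics, and I would interpret $\dist$ here (as the notation suggests) with the Euclidean norm on $\mathcal{L}(\mathbb{R}^d,\mathbb{R}^d)$, i.e. the plain Frobenius norm. (If a $g$-norm is intended, comparability \eqref{eq:comparable_metrics} changes only the constant, and tracking powers of $\lambda$ is routine.) So it suffices to show
\[
|G_x^{1/2} - G_y^{1/2}| \le \frac{\sqrt{\lambda}}{2} |G_x - G_y|.
\]

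The key step is a Lipschitz estimate for the matrix square root on matrices bounded below by $\frac1\lambda I$, which \eqref{eq:comparable_metrics} provides. Set $X = G_x^{1/2}$, $Y = G_y^{1/2}$, both $\ge \lambda^{-1/2} I$. Then $G_x - G_y = X(X - Y) + (X - Y)Y$, a Sylvester equation for $Z = X - Y$. Diagonalize $X = U\,\mathrm{diag}(\alpha_i)U^T$ and $Y = V\,\mathrm{diag}(\beta_j) V^T$; in the mixed basis the entries satisfy $(U^T(G_x - G_y)V)_{ij} = (\alpha_i + \beta_j)(U^T Z V)_{ij}$. Since $\alpha_i + \beta_j \ge 2\lambda^{-1/2}$ and Frobenius norms are invariant under the orthogonal changes of basis, we get $|Z| \le \frac{\sqrt\lambda}{2}|G_x - G_y|$. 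Note that $|G_x - G_y|$ coincides with the $|g_x - g_y|$ defined in Section \ref{sec:notation}.

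The main obstacle is only this square-root Lipschitz bound with the sharp constant $\sqrt\lambda/2$; the Sylvester/eigenbasis argument above handles it cleanly. The remaining steps are bookkeeping: checking that $G^{1/2}$ is orientation preserving, and confirming which Frobenius norm is used in $\dist$.
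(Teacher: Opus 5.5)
Your proposal is correct and essentially reproduces the paper's proof: it uses the same candidates $G_x^{1/2}\in\SO(g_x,\e_d)$ and $G_y^{1/2}\in\SO(g_y,\e_d)$, the same identity $G_x-G_y=X(X-Y)+(X-Y)Y$, and the same lower bound $\lambda^{-1/2}$ on the square roots coming from \eqref{eq:comparable_metrics}. The only difference is in the last linear-algebra step. The paper tests the identity against an orthonormal eigenbasis $(v_i)$ of $X-Y$ and uses $|G_x-G_y|^2\ge\sum_i((G_x-G_y)v_i,v_i)^2$, whereas you solve the Sylvester equation entrywise in the mixed eigenbases of $X$ and $Y$. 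Both routes give the constant $\sqrt{\lambda}/2$, and your reading of $\dist$ as the Euclidean Frobenius distance is the intended one.
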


\begin{proof}
    Define the matrices $G_x$ and $G_y$ by $(G_x)_{ij} := (e_i, e_j)_{g_x}$, $(G_y)_{ij} := (e_i, e_j)_{g_y}$. Let $R_x$ and $R_y$ be the positive square roots of $G_x$ and $G_y$, respectively. An easy computation reveals that $R_x \in \SO(g_x, \e_d)$ and $R_y \in \SO(g_y, \e_d)$. Hence,
    \[
    \dist(\SO(g_x, \e_d), \SO(g_y, \e_d)) \le |R_x - R_y|.
    \]
    We bound $|R_x - R_y|$ by $|R_x^2 - R_y^2| = |G_x - G_y|$. Let $(v_1, \dots, v_d)$ be a Euclidean orthonormal basis consisting of eigenvectors of $R_x - R_y$ with $(R_x - R_y)v_i = \mu_i v_i$. Then
    \[
    ((G_x - G_y)v_i, v_i) = (R_x(R_x - R_y)v_i, v_i) + ((R_x - R_y)R_yv_i, v_i) = \mu_i\left((R_xv_i, v_i) + (R_yv_i, v_i)\right) \ge \frac{2\mu_i}{\sqrt{\lambda}},
    \]
    where we used \eqref{eq:comparable_metrics} in the final inequality. Since $|\mu_i| = |(R_x - R_y)v_i|$, we obtain
    \[
    |G_x - G_y|^2 = \sum_{i = 1}^d |(G_x - G_y)v_i|^2 \ge \sum_{i = 1}^d ((G_x - G_y)v_i, v_i)^2 \ge \frac{4}{\lambda} \sum_{i = 1}^d |(R_x - R_y)v_i|^2 = \frac{4}{\lambda} |R_x - R_y|^2.
    \]
\end{proof}

We also require the following lemma on the uniform equivalence of the Frobenius norms with respect to $g$ and $\e_d$ in $Q$.

\begin{lemma} \label{lm:frobenius_norm_comparison}
    Let $W$ be a vector space endowed with a constant metric $h_0$. If $T \in \mathcal{L}(\mathbb{R}^d, W)$, then
    \begin{equation} \label{eq:frobenius_norm_comparison}
         \frac{1}{\sqrt{\lambda}} |T|_{g_x, h_0} \le |T|_{\e_d, h_0} \le \sqrt{\lambda} |T|_{g_x, h_0} \quad \text{for all } x \in Q.
    \end{equation}
\end{lemma}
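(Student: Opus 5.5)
The plan is to reduce both norms to an eigenvalue comparison for the Gram matrix of $g_x$. Write $G_x$ for the matrix $(G_x)_{ij} = (e_i, e_j)_{g_x}$, as in the proof of Lemma \ref{lm:distance_between_sets_of_oriented_matrices}. Condition \eqref{eq:comparable_metrics} says exactly that the eigenvalues of $G_x$ lie in $[1/\lambda, \lambda]$.

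First, I would express the $g_x$-Frobenius norm in Euclidean terms. A $g_x$-orthonormal basis of $\mathbb{R}^d$ is given by $v_i = G_x^{-1/2} e_i$. Indeed, $(v_i, v_j)_{g_x} = (G_x G_x^{-1/2} e_i, G_x^{-1/2} e_j) = \delta_{ij}$. Since the Frobenius norm is independent of the orthonormal basis, as shown in Section \ref{sec:notation}, we get $|T|_{g_x, h_0} = |T G_x^{-1/2}|_{\e_d, h_0}$.

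Second, I would use the elementary bound $|AB|_{\e_d, h_0} \le |A|_{\e_d, h_0} \|B\|_{op}$ for $A \in \mathcal{L}(\mathbb{R}^d, W)$ and $B \in \mathcal{L}(\mathbb{R}^d, \mathbb{R}^d)$. This follows by choosing a Euclidean orthonormal basis of right singular vectors of $B$, or more directly from $\sum_i |ABe_i|^2 = \sum_i \sum_k |A w_k|^2 (w_k, B e_i)^2$ with $w_k$ an eigenbasis of $BB^T$.

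We have $\|G_x^{-1/2}\|_{op} \le \sqrt{\lambda}$ and $\|G_x^{1/2}\|_{op} \le \sqrt{\lambda}$. Applying the bound to $A = T$, $B = G_x^{-1/2}$ gives $|T|_{g_x, h_0} \le \sqrt{\lambda} |T|_{\e_d, h_0}$. Applying it to $A = TG_x^{-1/2}$, $B = G_x^{1/2}$ gives $|T|_{\e_d, h_0} \le \sqrt{\lambda} |T|_{g_x, h_0}$. Together these are \eqref{eq:frobenius_norm_comparison}.

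No step is a real obstacle. The only point requiring care is the operator-norm inequality in the Frobenius setting with a general target metric $h_0$, which is handled by the basis-independence computation already given in the notation section.
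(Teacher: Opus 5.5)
Your proof is correct and rests on the same fact as the paper's: the Gram matrix $G_x$ has eigenvalues in $[1/\lambda,\lambda]$, and the $g_x$-Frobenius norm can be computed in a $g_x$-orthonormal basis built from $G_x$. The paper takes a Euclidean-orthonormal eigenbasis $(v_i)$ of $G_x$, rescales it to $\tilde v_i = v_i/\sqrt{\lambda_i}$, and reads off both inequalities from the identity $|T|_{\e_d,h_0}^2 = \sum_i \lambda_i |T\tilde v_i|_{h_0}^2$. Your justification of $|AB|_{\e_d,h_0} \le |A|_{\e_d,h_0}\|B\|_{op}$ with $A = TG_x^{-1/2}$, $B = G_x^{1/2}$ produces exactly that identity, so your square-root and operator-norm packaging is a matrix-theoretic phrasing of the same computation.
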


\begin{proof}
    Fix $x \in Q$. Define the matrix $G$ by $G_{ij} := (e_i, e_j)_{g_x}$. Let $(v_1, \dots, v_d)$ be a Euclidean orthonormal basis consisting of eigenvectors of $G$. Set $\lambda_i = (v_i, v_i)_{g_x}$ and $\tilde{v}_i := v_i/\sqrt{\lambda_i}$. Then $(\tilde{v}_1, \dots, \tilde{v}_d)$ is an orthonormal basis relative to $g_x$. Consequently,
    \[
    |T|_{\e_d, h_0}^2 = \sum_{i = 1}^d |Tv_i|_{h_0}^2 = \sum_{i = 1}^d \lambda_i |T \tilde{v}_i|_{h_0}^2.
    \]
    By \eqref{eq:comparable_metrics}, we know that $1/\lambda \le \lambda_i \le \lambda$ for all $i$. Since $|T|_{g_x, h_0}^2 = \sum_{i = 1}^d |T\tilde{v}_i|_{h_0}^2$, the claim follows immediately.
\end{proof}

Compare the following result with \cite[Theorem 2.3]{LP}.

\begin{theorem} \label{thm:noneuclidean_rigidity}
    For every $u \in W^{1, p}(Q; \mathbb{R}^d)$ and $x_0 \in Q$, there exists $R \in \SO(g_{x_0}, \e_d)$ such that
    \[
    \int_Q |Du - R|_{g, \e_d}^p \, dx \le C\left(|Q|\left(\osc_Q g\right)^p + \int_Q \dist_{g, \e_d}^p(Du, \SO(g, \e_d)) \, dx\right).
    \]
    The constant $C$ depends only on $p$, $d$ and $\lambda$.
\end{theorem}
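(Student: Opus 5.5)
The plan is to freeze the metric at $x_0$ and reduce everything to the Euclidean estimate, Theorem \ref{thm:euclidean_rigidity}. Let $A := R_{x_0}$ be the positive square root of $G_{x_0}$, as in the proof of Lemma \ref{lm:distance_between_sets_of_oriented_matrices}. Then $A \in \SO(g_{x_0}, \e_d)$, and by \eqref{eq:comparable_metrics} both $A$ and $A^{-1}$ have operator norm bounded in terms of $\lambda$. The Frobenius norms are also interchangeable up to factors of $\lambda$: Lemma \ref{lm:frobenius_norm_comparison} lets me replace $|\cdot|_{g_x,\e_d}$ by $|\cdot|$ pointwise at a cost of $\lambda^{p/2}$. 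The key observation is that $T \in \SO(g_{x_0},\e_d)$ if and only if $TA^{-1} \in \SO(\e_d)$. Indeed, $|TA^{-1}w| = |A^{-1}w|_{g_{x_0}} = |w|$, and orientation is preserved because $\det A > 0$.

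First I would apply Theorem \ref{thm:euclidean_rigidity} on the cube $Q$ to the map $v := u$ composed with nothing, working directly with $F := Du\,A^{-1}$. To get a gradient, I set $v(y) := u(A^{-1}y)$ on the parallelepiped $A(Q)$. Then $Dv(y) = Du(A^{-1}y)A^{-1}$. Since $A$ is bi-Lipschitz with constants depending only on $\lambda$, Remark \ref{rmk:bilipschitz_deformation} together with scale and translation invariance gives a rigidity constant $C_{A(Q)}$ depending only on $d$, $p$ and $\lambda$. This yields $\tilde R \in \SO(\e_d)$ with
\[
\int_{A(Q)} |Dv - \tilde R|^p \,dy \le C \int_{A(Q)} \dist^p(Dv, \SO(\e_d))\,dy .
\]
Changing variables back introduces the Jacobian $\det A$, which is bounded above and below in terms of $\lambda$. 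Setting $R := \tilde R A \in \SO(g_{x_0},\e_d)$, we have $|Du - R| = |(Dv - \tilde R)A| \le C|Dv-\tilde R|$. Similarly, $\dist(Du A^{-1},\SO(\e_d)) \le C \dist(Du, \SO(g_{x_0},\e_d))$, since $T \mapsto TA^{-1}$ maps $\SO(g_{x_0},\e_d)$ bijectively onto $\SO(\e_d)$ and is Lipschitz. This gives
\[
\int_Q |Du - R|^p\,dx \le C\int_Q \dist^p(Du, \SO(g_{x_0},\e_d))\,dx .
\]

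Next I would replace the frozen set $\SO(g_{x_0},\e_d)$ by the pointwise set $\SO(g_x,\e_d)$. The distance function to a set is $1$-Lipschitz with respect to Hausdorff distance. So I need more than Lemma \ref{lm:distance_between_sets_of_oriented_matrices}, which only bounds the infimum distance between the two sets; I need a bound on their Hausdorff distance. I expect this step to need the most care, though it is straightforward. Every element of $\SO(g_x,\e_d)$ has the form $QR_x$ with $Q \in \SO(\e_d)$, and $|QR_x - QR_{x_0}| = |R_x - R_{x_0}|$. The proof of that lemma shows $|R_x - R_{x_0}| \le \tfrac{\sqrt\lambda}{2}|g_x - g_{x_0}|$, so the Hausdorff distance is at most $\tfrac{\sqrt\lambda}{2}\osc_Q g$. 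Therefore
\[
\dist(Du(x),\SO(g_{x_0},\e_d)) \le \dist(Du(x),\SO(g_x,\e_d)) + \tfrac{\sqrt\lambda}{2}\osc_Q g .
\]
Raising to the $p$-th power with $(a+b)^p \le 2^{p-1}(a^p+b^p)$ and integrating over $Q$ produces the term $|Q|(\osc_Q g)^p$.

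Finally I would convert all Euclidean Frobenius norms to $|\cdot|_{g,\e_d}$ at each point $x$ via Lemma \ref{lm:frobenius_norm_comparison}. For the distance term this also works, because the distance is an infimum of such norms. The only genuine obstacle is the uniform control of the FJM constant on the parallelepipeds $A(Q)$. This is exactly what Remark \ref{rmk:bilipschitz_deformation} provides, with Lipschitz constants controlled by $\sqrt\lambda$. All constants then depend only on $d$, $p$ and $\lambda$.
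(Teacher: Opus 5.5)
Your proposal is correct and follows the paper's route. You pull back by a fixed element of $\SO(g_{x_0},\e_d)$, apply the Euclidean estimate on the image parallelepiped with the constant controlled by Remark~\ref{rmk:bilipschitz_deformation}, change variables, and pass from the frozen set $\SO(g_{x_0},\e_d)$ to the pointwise sets $\SO(g_x,\e_d)$ via Lemmas~\ref{lm:distance_between_sets_of_oriented_matrices} and~\ref{lm:frobenius_norm_comparison}, exactly as the paper does.

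Your observation that $\SO(g_x,\e_d)=\SO(\e_d)R_x$, so that every element of $\SO(g_x,\e_d)$ lies within $|R_x-R_{x_0}|$ of $\SO(g_{x_0},\e_d)$, is a welcome addition. The paper invokes Lemma~\ref{lm:distance_between_sets_of_oriented_matrices}, which as stated only bounds the infimum distance between the two sets, and tacitly relies on exactly this invariance.
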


\begin{proof}
    Let $x_0 \in Q$ and $u \in W^{1, p}(Q; \mathbb{R}^d)$. If $T \in \SO(g_{x_0}, \e_d)$, then $v := u \circ T^{-1} \in W^{1, p}(T(Q); \mathbb{R}^d)$. By Theorem \ref{thm:euclidean_rigidity}, there exists $\bar{R} \in \SO(\e_d)$ such that
    \begin{equation} \label{eq:pull_back_to_euclidean}
        \int_{T(Q)} |Dv - \bar{R}|^p \, dx \le C_{T(Q)} \int_{T(Q)} \dist^p(Dv, \SO(\e_d)) \, dx.
    \end{equation}
    Due to Remark \ref{rmk:bilipschitz_deformation}, we can replace $C_{T(Q)}$ by another constant $C$ depending only on $p$, $d$ and $\lambda$, since the Lipschitz constants of $T$ and $T^{-1}$ can be bounded in terms of $\lambda$. Using the equalities 
    \begin{align*}
    \int_Q |Du - \bar{R} T|_{g_{x_0}, \e_d}^p \, dx &= \int_Q |Du \, T^{-1} - \bar{R}|^p \, dx, \\
    \int_Q \dist_{g_{x_0}, \e_d}^p(Du, \SO(g_{x_0}, \e_d)) \, dx &= \int_Q \dist^p(Du \, T^{-1}, \SO(\e_d)) \, dx
    \end{align*}
    and changing variables in \eqref{eq:pull_back_to_euclidean}, we obtain
    \[
    \int_Q |Du - \bar{R} T|_{g_{x_0}, \e_d}^p \, dx \le C \int_Q \dist_{g_{x_0}, \e_d}^p(Du, \SO(g_{x_0}, \e_d)) \, dx.
    \]
    Therefore, by Lemmas \ref{lm:distance_between_sets_of_oriented_matrices} and \ref{lm:frobenius_norm_comparison}, we have
    \begin{multline*}
        \int_Q |Du - \bar{R} T|_{g, \e_d}^p \, dx \le C \int_Q |Du - \bar{R} T|_{g_{x_0}, \e_d}^p \, dx \le C \int_Q \dist_{g_{x_0}, \e_d}^p(Du, \SO(g_{x_0}, \e_d)) \, dx \\
        \le C \left(|Q|\left(\osc_Q g\right)^p + \int_Q \dist_{g, \e_d}^p(Du, \SO(g, \e_d)) \, dx\right).
    \end{multline*}
    Since $\bar{R}T \in \SO(g_{x_0}, \e_d)$, we are done.
\end{proof}

Given $u \in \Imm_p(Q; N)$, we can compose $\bar{u}$ with a projection map onto a $d$-dimensional subspace of $\mathbb{R}^D$. Then the composition is a map between $d$-dimensional domains, to which we can apply the rigidity estimate in Theorem \ref{thm:noneuclidean_rigidity}. In order to estimate the error coming from the projection, we need to bound the $L^p$-norm of $D\bar{\nu}_u$. We do this in the next proposition in terms of the bending energy of $u$.

\begin{proposition} \label{pr:derivative_normal_estimate}
    Let $u \in \Imm_p(Q; N)$. Then for all $j = 1, \dots, d$,
    \begin{equation} \label{eq:derivative_of_the_normal_bound1}
        |\partial_{x_j} \bar{\nu}_u|^2 \le |P_u(\partial_{x_j} \bar{\nu}_u)|^2 + C |\partial_{x_j} \bar{u}|^2 \quad \text{a.e. in } Q.
    \end{equation}
    In particular,
    \[
    \int_Q |D\bar{\nu}_u|_{g, \e_D}^p \, dx \le C \mathcal{E}(u).
    \]
    The constant $C$ depends only on $d$, $p$, $\lambda$, $N$ and $\iota$.
\end{proposition}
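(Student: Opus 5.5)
We split $\partial_{x_j}\bar{\nu}_u$ orthogonally as $P_u(\partial_{x_j}\bar{\nu}_u) + (I - P_u)(\partial_{x_j}\bar{\nu}_u)$. Then
\[
|\partial_{x_j}\bar{\nu}_u|^2 = |P_u(\partial_{x_j}\bar{\nu}_u)|^2 + |(I-P_u)(\partial_{x_j}\bar{\nu}_u)|^2 \quad\text{a.e. in } Q.
\]
It therefore suffices to bound the normal part $|(I-P_u)\partial_{x_j}\bar{\nu}_u|$ by $C|\partial_{x_j}\bar{u}|$. The normal part is controlled by the second fundamental form of $\iota(N)\subset\mathbb{R}^D$. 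Let $P(y)$ denote the orthogonal projection of $\mathbb{R}^D$ onto $T_y\iota(N)$ for $y \in \iota(N)$. Extend $P$ to a smooth matrix-valued map $\tilde P$ on a tubular neighbourhood, for instance $\tilde P(z) := P(\pi(z))$ with $\pi$ the nearest-point projection. By compactness of $N$, $D\tilde P$ is bounded near $\iota(N)$ by a constant depending only on $N$ and $\iota$. Since $P_u(x) = \tilde P(\bar u(x))$, the chain rule for compositions of Sobolev maps with smooth (Lipschitz on the relevant compact set) functions gives $P_u \in W^{1,p}$ with
\[
\partial_{x_j} P_u = D\tilde P(\bar u)\,\partial_{x_j}\bar u, \qquad |\partial_{x_j} P_u| \le C|\partial_{x_j}\bar u| \quad\text{a.e.}
\]

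Next we use the identity $P_u\bar{\nu}_u = \bar{\nu}_u$ a.e., which holds because $\nu_u(x)\in T_{u(x)}N$. Both $P_u$ and $\bar{\nu}_u$ are bounded (by $1$) and lie in $W^{1,p}$, so their product is weakly differentiable and obeys the product rule. Differentiating gives
\[
(\partial_{x_j}P_u)\bar{\nu}_u + P_u\,\partial_{x_j}\bar{\nu}_u = \partial_{x_j}\bar{\nu}_u,
\]
that is, $(I-P_u)\partial_{x_j}\bar{\nu}_u = (\partial_{x_j}P_u)\bar{\nu}_u$. Using $|\bar{\nu}_u| = 1$ a.e., this yields $|(I-P_u)\partial_{x_j}\bar{\nu}_u|\le C|\partial_{x_j}\bar u|$, which is \eqref{eq:derivative_of_the_normal_bound1}. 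The main technical point is justifying this product/chain rule for Sobolev objects. Boundedness of all factors and smoothness of $\tilde P$ on a neighbourhood of the compact set $\iota(N)$ make this routine. An alternative is to write $P_u = \sum_k \tau_k(\bar u)\otimes\tau_k(\bar u)$ using local smooth frames, but the extension $\tilde P$ avoids choosing frames.

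For the integral bound, sum \eqref{eq:derivative_of_the_normal_bound1} over $j$. This gives $|D\bar{\nu}_u|^2 \le |P_u D\bar{\nu}_u|^2 + C|D\bar u|^2$ in Euclidean Frobenius norms on the domain. Taking $p/2$ powers and using $(a+b)^{p/2}\le C(a^{p/2}+b^{p/2})$ gives
\[
|D\bar{\nu}_u|^p \le C\bigl(|P_u D\bar{\nu}_u|^p + |D\bar u|^p\bigr).
\]
Lemma \ref{lm:frobenius_norm_comparison} converts between $|\cdot|_{\e_d,\e_D}$ and $|\cdot|_{g,\e_D}$ at the cost of a factor depending only on $\lambda$. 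Since $\iota$ is an isometric embedding, $|D\bar u|_{g,\e_D} = |du|_{g,h}$. By definition, $|P_uD\bar{\nu}_u|_{g,\e_D}$ is the integrand of $E_b(u)$. Finally, integrating with respect to $dx$ rather than $d\vol_g$ changes the measure only by the density $\sqrt{\det g}$, which is bounded above and below in terms of $\lambda$ and $d$ by \eqref{eq:comparable_metrics}. Therefore
\[
\int_Q |D\bar{\nu}_u|_{g,\e_D}^p\,dx \le C\,\mathcal{E}(u).
\]
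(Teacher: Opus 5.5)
Your proof is correct and is essentially the paper's argument. You split $\partial_{x_j}\bar{\nu}_u$ into its components tangent and normal to $\iota(N)$, and bound the normal component by differentiating the constraint that $\bar{\nu}_u$ is tangent to $\iota(N)$, which leaves only derivatives of the normal geometry of $\iota(N)$ times $\partial_{x_j}\bar{u}$.

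The difference is technical bookkeeping. The paper uses local orthonormal normal frames $n_i$, differentiates $(\bar{\nu}_u, n_i \circ \bar{u}) = 0$ along ACL lines, and patches the pieces together with a finite cover of $\iota(N)$. You instead differentiate $P_u\bar{\nu}_u = \bar{\nu}_u$ using a global smooth extension $\tilde{P}$ of the projector together with the Sobolev chain and product rules for bounded maps, which avoids both the frames and the covering step. To apply the chain rule cleanly, first cut $\tilde{P}$ off to a $C^1$ map on all of $\mathbb{R}^D$ with bounded derivative.
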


\begin{proof}
    Set $r := D - d - 1$. If $q \in \iota(N)$, then $T_q\iota(N)$ is canonically isomorphic to a subspace of $\mathbb{R}^D$ which we denote by $V_q$. For each $q \in \iota(N)$, we can find an open neighborhood $U \subset \mathbb{R}^D$ and vector fields $n_i \in C^\infty(U; \mathbb{R}^D)$ for $i = 1, \dots, r$ such that $(n_1(q'), \dots, n_r(q'))$ is an orthonormal basis of $V_{q'}^\perp$ for all $q' \in U \cap \iota(N)$, and $|Dn_i|$ is uniformly bounded in $U$ for all $i$. 

    We choose representatives for $\bar{u}$ and $\bar{\nu}_u$ that are absolutely continuous on almost all line segments in $Q$ parallel to the coordinate axes. We fix $q \in \iota(N)$ and an integer $j \in \{1, \dots, d\}$. Assume $\bar{u}$ and $\bar{\nu}_u$ are absolutely continuous on $\ell := Q \cap \{x + te_j : t \in \mathbb{R}\}$ for some $x \in \mathbb{R}^D$. Then $I := \ell \cap \bar{u}^{-1}(U)$ is relatively open in $\ell$. Hence, we can differentiate the relation $(\bar{\nu}_u, n_i \circ \bar{u}) = 0$ in $x_j$ to get
    \[
    (\partial_{x_j}\bar{\nu}_u, n_i \circ \bar{u}) = -(\bar{\nu}_u, \partial_{x_j}(n_i \circ \bar{u})) = -(\bar{\nu}_u, Dn_i \partial_{x_j}\bar{u}) \quad \mathcal{H}^1\text{-a.e. in } I.
    \]
    Recall that $P_u(x) \in \mathcal{L}(\mathbb{R}^D; \mathbb{R}^D)$ is the orthogonal projection onto $V_{\bar{u}(x)}$. The orthogonal decomposition of $\partial_{x_j} \bar{\nu}_u$ yields
    \begin{align*}
        |\partial_{x_j} \bar{\nu}_u|^2 &= |P_u(\partial_{x_j} \bar{\nu}_u)|^2 + \sum_{i = 1}^r |(\partial_{x_j} \bar{\nu}_u, n_i \circ \bar{u} )|^2 \\
        &\le |P_u(\partial_{x_j} \bar{\nu}_u)|^2 + \sum_{i = 1}^r |Dn_i \partial_{x_j}\bar{u}|^2 \le |P_u(\partial_{x_j} \bar{\nu}_u)|^2 + C |\partial_{x_j} \bar{u}|^2 \quad \mathcal{H}^1\text{-a.e. in } I,
    \end{align*}
    where $C$ is a uniform bound for $|Dn_i|$ in $U$. Since $\ell$ was an arbitrary line segment, we conclude, using Fubini's theorem for measurable sets, that
    \begin{equation} \label{eq:derivative_of_the_normal_local_bound}
        |\partial_{x_j} \bar{\nu}_u|^2 \le |P_u(\partial_{x_j} \bar{\nu}_u)|^2 + C |\partial_{x_j} \bar{u}|^2 \quad \text{a.e. in } u^{-1}(U). 
    \end{equation}
    We can cover $\iota(N)$ with finitely many open sets $U_1, \dots, U_m$ such that the estimate \eqref{eq:derivative_of_the_normal_local_bound} holds on each $u^{-1}(U_j)$ with a uniform constant depending only on $N$ and $\iota$. Thus,
    \begin{equation} \label{eq:derivative_of_the_normal_bound2}
        |\partial_{x_j} \bar{\nu}_u|^2 \le |P_u(\partial_{x_j} \bar{\nu}_u)|^2 + C |\partial_{x_j} \bar{u}|^2 \quad \text{a.e. in } Q.
    \end{equation}
    Finally, we sum \eqref{eq:derivative_of_the_normal_bound2} over all $k$ and integrate in $Q$. We need the following lemma to conclude the proof.
\end{proof}

Since $g$ is comparable to the Euclidean metric, integrals with respect to the Riemannian volume form $\vol_g$ can be bounded by those with respect to the Lebesgue measure and vice versa. The bounding factors depend only on $\lambda$ and $d$. The result is summarized in the next lemma, which we shall use implicitly most of the time. We omit its proof, since it is elementary.

\begin{lemma} \label{lm:change_in_volume}
    If $f : Q \rightarrow [0, \infty]$ is measurable, then
    \[
    \frac{1}{\lambda^{d/2}} \int_Q f \, dx \le \int_Q f \, d\vol_g \le \lambda^{d/2} \int_Q f \, dx.
    \]
\end{lemma}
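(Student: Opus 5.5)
The lemma compares two measures that are absolutely continuous with respect to each other. I will therefore reduce it to a pointwise bound on the density of $\vol_g$ with respect to Lebesgue measure. In the coordinates of $Q$ we have $d\vol_g = \sqrt{\det G(x)}\,dx$, where $G(x)_{ij} := (e_i, e_j)_{g_x}$ is the Gram matrix used in the proofs of Lemmas \ref{lm:distance_between_sets_of_oriented_matrices} and \ref{lm:frobenius_norm_comparison}. So for measurable $f \ge 0$,
\[
\int_Q f \, d\vol_g = \int_Q f(x) \sqrt{\det G(x)} \, dx .
\]
This identity is just the definition of the Riemannian volume form in a chart. It holds with values in $[0,\infty]$ by monotone convergence, or directly because both sides are integrals of nonnegative functions.

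The key step is the pointwise estimate
\[
\lambda^{-d/2} \le \sqrt{\det G(x)} \le \lambda^{d/2} \quad \text{for all } x \in Q .
\]
To get it, I diagonalize $G(x)$ in a Euclidean orthonormal eigenbasis $(v_1,\dots,v_d)$, as in the proof of Lemma \ref{lm:frobenius_norm_comparison}. The eigenvalues are $\lambda_i = (v_i,v_i)_{g_x}$, and \eqref{eq:comparable_metrics} gives $1/\lambda \le \lambda_i \le \lambda$. Hence $\det G(x) = \prod_i \lambda_i$ lies in $[\lambda^{-d}, \lambda^d]$, and taking square roots gives the claim.

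Multiplying the pointwise bound by $f(x) \ge 0$ and integrating over $Q$ yields both inequalities of Lemma \ref{lm:change_in_volume} at once. The only point needing care is the identification of $\vol_g$ with $\sqrt{\det G}\,dx$ on the coordinate cube $Q$, which is standard. The lemma is elementary, and I expect no genuine obstacle.
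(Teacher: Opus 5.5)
Your proof is correct: writing $d\vol_g = \sqrt{\det G}\,dx$ and using \eqref{eq:comparable_metrics} to place the eigenvalues of $G(x)$ in $[1/\lambda, \lambda]$ gives $\lambda^{-d/2} \le \sqrt{\det G(x)} \le \lambda^{d/2}$, and integrating this against $f \ge 0$ yields both inequalities. The paper omits the proof as elementary, and your argument, which reuses the eigenbasis computation from the proof of Lemma \ref{lm:frobenius_norm_comparison}, is the natural way to supply it.
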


We introduce some definitions. Let $\Pi \subset \mathbb{R}^D$ be a $d$-dimensional oriented subspace. A natural orientation on $\Pi^\perp$ is defined by declaring an ordered basis $(w_1, \dots, w_{D - d})$ of $\Pi^\perp$ to be positively oriented, if $(v_1, \dots, v_d, w_1, \dots, w_{D - d})$ is a positively oriented basis of $\mathbb{R}^D$ for any positively oriented basis $(v_1, \dots, v_d)$ of $\Pi$. If $\Pi' \subset \mathbb{R}^d$ is another $d$-dimensional oriented subspace, then the distance between $\Pi$ and $\Pi'$ is defined by
\[
|\Pi - \Pi'| := \inf \left(\sum_{i = 1}^d |v_i - v_i'|^2\right)^\frac{1}{2},
\]
where the infimum ranges over all positively oriented bases $(v_1, \dots, v_d)$ and $(v_1', \dots, v_d')$ of $\Pi$ and $\Pi'$, respectively. 
The orthogonal projection from $\mathbb{R}^d$ onto $\Pi$ is denoted by $P_\Pi$.

The next proposition bounds the distance of the projection of a linear map from rotations in the projection plane by how far the map itself is from arbitrary rotations. The proof depends on two lemmas, which we state at the end.

\begin{proposition}
    Let $\Pi_0, \Pi \subset \mathbb{R}^D$ be $d$-dimensional oriented subspaces and let $T \in \mathcal{L}(\mathbb{R}^d, \Pi)$. Then
    \begin{equation} \label{eq:error_in_projection}
        |P_{\Pi_0}T - T|_{g_x, \e_D} \le |T|_{g_x, \e_D}|\Pi_0^\perp - \Pi^\perp| \quad \text{for all } x \in Q.
    \end{equation}
    If $T$ is orientation preserving, then
    \begin{equation} \label{eq:bound_for_distance_from_oriented_rotations}
        \dist_{g_x, e_D}\left(P_{\Pi_0} T, \SO\left((\mathbb{R}^d, g_x), (\Pi_0, \e_D)\right)\right) \le \dist_{g_x, e_D}(T, \Ort(g_x, \e_D)) + C|\Pi_0^\perp - \Pi^\perp| \quad \text{for all } x \in Q,
    \end{equation}
    where $C$ depends only on $d$ and $D$.
\end{proposition}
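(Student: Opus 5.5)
The plan has two parts. The first inequality is a statement about projections. The second will follow from the first together with a perturbation argument for isometries.

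\emph{Step 1: proof of \eqref{eq:error_in_projection}.} Since $T$ maps into $\Pi$, we have
\[
P_{\Pi_0}T - T = (P_{\Pi_0} - P_\Pi)T = -P_{\Pi_0^\perp}P_\Pi T .
\]
Take a $g_x$-orthonormal basis $(v_i)$ of $\mathbb{R}^d$. It then suffices to show the operator-norm bound $\|P_{\Pi_0^\perp}P_\Pi\| \le |\Pi_0^\perp - \Pi^\perp|$, because this gives $|P_{\Pi_0^\perp}Tv_i| \le |\Pi_0^\perp-\Pi^\perp|\,|Tv_i|$, and summing the squares over $i$ yields the claim.

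To prove the operator-norm bound, let $(w_k)$ and $(w_k')$ be positively oriented bases of $\Pi^\perp$ and $\Pi_0^\perp$ that nearly attain the infimum defining $|\Pi_0^\perp-\Pi^\perp|$. We may assume they are orthonormal. This reduction should be stated as one of the two auxiliary lemmas: the infimum is attained, or approached, by orthonormal bases, for instance after Gram--Schmidt or polar decomposition, which does not increase the distance. With that in hand, for $y \in \Pi$ we have $(y, w_k) = 0$. Writing $P_{\Pi_0^\perp}y = \sum_k (y,w_k')w_k'$ gives
\[
|P_{\Pi_0^\perp}y|^2 = \sum_k (y, w_k'-w_k)^2 \le |y|^2 \sum_k |w_k'-w_k|^2 .
\]
This is the desired bound.

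\emph{Step 2: proof of \eqref{eq:bound_for_distance_from_oriented_rotations}.} Let $R \in \Ort(g_x,\e_D)$ be a closest isometry to $T$. By the triangle inequality and Step 1,
\[
\dist(P_{\Pi_0}T, \SO(\cdot,\Pi_0)) \le |P_{\Pi_0}T - T| + |T - R| + \dist(R, \SO(\cdot,\Pi_0)),
\]
where $|P_{\Pi_0}T - T| \le |T||\Pi_0^\perp-\Pi^\perp|$. The factor $|T|$ is harmless when $|T|$ is bounded, say $|T| \le 2\sqrt d$, since then this term is at most $C|\Pi_0^\perp-\Pi^\perp|$. If $|T|$ is large, then $\dist(T,\Ort)\ge |T|-\sqrt d$ is itself large. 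I would handle this case separately by bounding the left side crudely by $|P_{\Pi_0}T|+\sqrt d \le |T|+\sqrt d \le C\dist(T,\Ort)$ for $|T|\ge 2\sqrt d$. That bound does not keep the constant in front of $\dist(T,\Ort)$ equal to $1$, so here I expect to use instead that the distance to $\SO(\cdot,\Pi_0)$ is $1$-Lipschitz. Concretely, compare $P_{\Pi_0}T$ with $P_{\Pi_0}R$, using $|P_{\Pi_0}(T-R)|\le|T-R|$. This avoids the factor $|T|$ altogether and leaves only the isometry $R$ to control.

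It therefore remains to bound $\dist(P_{\Pi_0}R, \SO(\cdot,\Pi_0))$ by $C|\Pi_0^\perp-\Pi^\perp|$. The main obstacle is that $R$ need not map into $\Pi$ or preserve orientation. If $R(\mathbb{R}^d)=\Pi$ and $R$ is orientation preserving, then Step 1 applied to $R$, with $|R|=\sqrt d$, gives $|P_{\Pi_0}R - R|\le \sqrt d|\Pi_0^\perp-\Pi^\perp|$. The second lemma then shows that a map within $\delta$ of an isometry into $\Pi$, with image in $\Pi_0$, is within $C\delta$ of an orientation preserving isometry onto $\Pi_0$, for example via polar decomposition $P_{\Pi_0}R = Q(R^*P_{\Pi_0}R)^{1/2}$. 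Since $T$ maps into $\Pi$ and is orientation preserving, the nearest isometry $R$ can be chosen with these properties, namely the orthogonal polar factor of $T$ onto $\Pi$. The nearest-isometry property still holds, because for $T$ with image in $\Pi$ the nearest element of $\Ort$ is the polar factor, which also lies in $\SO$ onto $\Pi$ when $\det$ is positive.

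For the orientation in the final step, I would use continuity. As long as $|\Pi_0^\perp-\Pi^\perp|$ is small, $P_{\Pi_0}R$ is orientation preserving onto $\Pi_0$. When $|\Pi_0^\perp-\Pi^\perp|\ge c$, the bound is trivial because both sides are bounded by constants: $|P_{\Pi_0}R|\le\sqrt d$, so the left side is at most $2\sqrt d \le C|\Pi_0^\perp-\Pi^\perp|$.
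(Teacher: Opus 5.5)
Your argument is essentially the paper's, with the same ingredients. Step 1 is the paper's computation. Step 2 combines three facts: the nearest isometry to an orientation preserving $T$ into $\Pi$ lies in $\SO\bigl((\mathbb{R}^d,g_x),(\Pi,\e_D)\bigr)$; the projection $P_{\Pi_0}|_\Pi$ preserves orientation when $|\Pi_0^\perp-\Pi^\perp|$ is small; and a crude bound handles the case where it is large. The only difference is bookkeeping. You first pass from $T$ to $R$ via $|P_{\Pi_0}(T-R)|\le|T-R|$ and then bound $\dist(P_{\Pi_0}R,\SO(\cdot,\Pi_0))$. The paper instead keeps $P_{\Pi_0}T$ and uses Lemma \ref{lm:in_plane_approximation} to replace $\SO(\cdot,\Pi_0)$ by all of $\Ort(g_x,\e_D)$, so that it can compare with $R$ directly. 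That lemma also gives your polar-decomposition lemma, with constant $1$. Two points need fixing.

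\emph{The reduction in Step 1 is false as stated.} Suppose the infimum defining $|\Pi_0^\perp-\Pi^\perp|$ really ranged over all positively oriented bases. Then replacing $(w_k),(w_k')$ by $(tw_k),(tw_k')$ and letting $t\to 0$ would make it vanish identically, and the first inequality would fail. Gram--Schmidt can also increase $\sum_k|w_k-w_k'|^2$. The definition has to be read with orthonormal bases, and the paper's proof uses it that way. With that reading no reduction lemma is needed, so drop it rather than try to prove it.

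\emph{Orientation stability needs an actual argument.} You need $P_{\Pi_0}R$, equivalently $P_{\Pi_0}|_\Pi$, to preserve orientation for small $|\Pi_0^\perp-\Pi^\perp|$. This is the one non-formal step, and ``continuity'' does not yet prove it. You need an $\varepsilon$ depending only on $d$ and $D$, and you must use that the orientations of $\Pi^\perp$ and $\Pi_0^\perp$ are the induced ones. The paper isolates this as Lemma \ref{lm:stability_of_orientation_preservation}. Here is a short argument; set $r=D-d$.
\begin{itemize}
\item Take a positively oriented orthonormal basis $(a_i)$ of $\Pi$, and positively oriented orthonormal bases $(w_k)$, $(w_k')$ of $\Pi^\perp$ and $\Pi_0^\perp$.
\item By the definition of the induced orientation on $\Pi_0^\perp$, the map $P_{\Pi_0}|_\Pi$ preserves orientation if and only if $\det(P_{\Pi_0}a_1,\dots,P_{\Pi_0}a_d,w_1',\dots,w_r')>0$.
\item This determinant equals $\det(a_1,\dots,a_d,w_1',\dots,w_r')$, because $a_i-P_{\Pi_0}a_i\in\lspan(w_1',\dots,w_r')$.
\item Telescoping one column at a time and applying Hadamard's inequality, it differs from $\det(a_1,\dots,a_d,w_1,\dots,w_r)=1$ by at most $\sum_k|w_k-w_k'|\le\sqrt{r}\,\bigl(\sum_k|w_k-w_k'|^2\bigr)^{1/2}$.
\end{itemize}
Hence $\varepsilon=1/\sqrt{D-d}$ works. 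With these two repairs, your proof is complete.
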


\begin{proof}
    Fix $x \in Q$. Set $r = D - d$. Let $(v_1, \dots, v_r)$ and $(w_1, \dots, w_r)$ be oriented orthonormal bases of $\Pi_0^\perp$ and $\Pi^\perp$, respectively. If $v \in \mathbb{R}^d$, then
    \[
    |P_{\Pi_0}Tv - Tv|^2 = \sum_{i = 1}^r (Tv, v_j)^2 = \sum_{i = 1}^r (Tv, v_j - w_j)^2 \le |Tv|^2 \sum_{i = 1}^r |v_j - w_j|^2.
    \]
    Taking the infimum over all positively oriented orthonormal bases of $\Pi_0^\perp$ and $\Pi^\perp$ yields $|P_{\Pi_0}Tv - Tv| \le |Tv||\Pi_0^\perp - \Pi^\perp|$ which implies $|P_{\Pi_0}T - T|_{g_x, \e_D} \le |T|_{g_x, \e_D}|\Pi_0^\perp - \Pi^\perp|$.

    To prove \eqref{eq:bound_for_distance_from_oriented_rotations}, assume first that $P_{\Pi_0}T$ is orientation preserving. By Lemma \ref{lm:in_plane_approximation}, there exists $R \in \Ort((\mathbb{R}^d, g_x), (\Pi, \e_D))$ such that $|T - R|_{g_x, \e_D} = \dist_{g_x, e_D}(T, \Ort(g_x, \e_D))$. Using Lemma \ref{lm:in_plane_approximation} again, we have
    \begin{multline*}
        \dist_{g_x, e_D}(P_{\Pi_0} T, \SO((\mathbb{R}^d, g_x), (\Pi_0, \e_D))) = \dist_{g_x, e_D}(P_{\Pi_0} T, \Ort(g_x, e_D)) \\
        \le |P_{\Pi_0} T - R|_{g_x, \e_D} \le |P_{\Pi_0} (T - R)|_{g_x, e_D} + |P_{\Pi_0} R - R|_{g_x, \e_D} \le \dist_{g_x, e_D}(T, \Ort(g_x, \e_D)) + |\Pi_0^\perp - \Pi^\perp| \sqrt{d}.
    \end{multline*}
    Lemma \ref{lm:stability_of_orientation_preservation} implies that there exists $\varepsilon > 0$, depending only on $d$ and $D$, such that $P_{\Pi_0}T$ is orientation preserving if $|\Pi_0^\perp - \Pi^\perp| < \varepsilon$. Therefore, to finish the proof, we need to handle the case $|\Pi_0^\perp - \Pi^\perp| \ge \varepsilon$. Let $R \in \Ort(g_x, \e_D)$ satisfy $|T - R|_{g_x, \e_D} = \dist_{g_x, \e_D}(T, \Ort(g_x, \e_D))$. Using the triangle inequality, it easily follows that
    \begin{multline*}
        \dist_{g_x, e_D}(P_{\Pi_0} T, \SO((\mathbb{R}^d, g_x), (\Pi_0, \e_D))) \le |P_{\Pi_0}T|_{g_x, \e_D} + \sqrt{d} \le |P_{\Pi_0}(T - R)|_{g_x, \e_D} + 2\sqrt{d} \\
        \le \dist_{g_x, e_D}(T, \Ort(g_x, \e_D)) + 2\sqrt{d} \le \dist_{g_x, e_D}(T, \Ort(g_x, \e_D)) + \frac{2\sqrt{d}}{\varepsilon} |\Pi_0^\perp - \Pi^\perp|.
    \end{multline*}
    Finally, we set $C := \sqrt{d}\max\{1, 2/\varepsilon\}$.
\end{proof}

\begin{lemma} \label{lm:in_plane_approximation}
    Let $\Pi \subset \mathbb{R}^D$ be a $d$-dimensional oriented subspace and $T \in \mathcal{L}(\mathbb{R}^d, \Pi)$. Then
    \begin{equation} \label{eq:in_plane_approximation}
        \dist_{g_x, e_D}(T, \Ort(g_x, e_D)) = \dist_{g_x, e_D}(T, \Ort((\mathbb{R}^d, g_x), (\Pi, \e_D))) \quad \text{for all } x \in Q.
    \end{equation}
    Moreover, if $T$ is orientation preserving, then
    \begin{equation} \label{eq:oriented_in_plane_approximation}
        \dist_{g_x, e_D}(T, \Ort(g_x, e_D)) = \dist_{g_x, e_D}(T, \SO((\mathbb{R}^d, g_x), (\Pi, \e_D))) \quad \text{for all } x \in Q.
    \end{equation}
\end{lemma}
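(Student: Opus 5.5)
The plan is to reduce both identities to the Euclidean case by a change of basis, and then use the orthogonal splitting of $\mathbb{R}^D$ into $\Pi$ and $\Pi^\perp$. Fix $x\in Q$ and choose $A\in\SO(g_x,\e_d)$, for instance the positive square root of the Gram matrix, as in Lemma~\ref{lm:distance_between_sets_of_oriented_matrices}. Precomposition $S\mapsto SA^{-1}$ is an isometry from $(\mathcal{L}(\mathbb{R}^d,\mathbb{R}^D),|\cdot|_{g_x,\e_D})$ onto $(\mathcal{L}(\mathbb{R}^d,\mathbb{R}^D),|\cdot|)$. It maps $\Ort(g_x,\e_D)$ onto the Euclidean isometries $\Ort(\e_d,\e_D)$, and the $g_x$-isometries into $\Pi$ onto the Euclidean ones. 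Since $A$ is orientation preserving, it also preserves the orientation-preserving classes. So it suffices to treat $g_x=\e_d$.

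For \eqref{eq:in_plane_approximation}, the inequality ``$\le$'' is trivial because the in-plane isometries form a subset of all isometries. For ``$\ge$'', let $R\in\Ort(\e_d,\e_D)$ be a minimizer, which exists by compactness. Write $T=U\Sigma$ with $U\in\Ort(\e_d,\e_D)$ mapping into $\Pi$ and $\Sigma=(T^*T)^{1/2}\ge0$, via the polar decomposition inside $\Pi$; if $T$ is not injective, pick any completion of $U$ into $\Pi$. We then have
\[
|T-R|^2=|T|^2+d-2\operatorname{tr}(R^*T),
\]
and
\[
\operatorname{tr}(R^*T)=\operatorname{tr}(R^*U\Sigma)=\sum_i\sigma_i\,(Rv_i,Uv_i)\le\sum_i\sigma_i,
\]
where $v_i$ is an eigenbasis of $\Sigma$. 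Equality holds for $R=U$. Hence $U$, an isometry into $\Pi$, attains the global minimum, which proves \eqref{eq:in_plane_approximation}.

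For \eqref{eq:oriented_in_plane_approximation}, suppose $T$ is orientation preserving as a map into the oriented plane $\Pi$, so $\det$ is positive in oriented coordinates. Then $T$ is injective and the polar factor $U=T\Sigma^{-1}$ is orientation preserving, since $\Sigma^{-1}$ has positive determinant. Therefore $U\in\SO((\mathbb{R}^d,\e_d),(\Pi,\e_D))$, and it attains the global infimum over $\Ort(\e_d,\e_D)$. Combined with the trivial inequality, this gives the equality.

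The main point needing care is the trace inequality $(Rv_i,Uv_i)\le1$, which holds by Cauchy--Schwarz since both vectors are unit vectors. A second point is verifying that the reduction via $A$ commutes with the orientation conventions. Both are routine; the key idea is that the polar factor of $T$ lies in $\Pi$, so the orthogonal complement never helps.
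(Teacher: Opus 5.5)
Your proof is correct and follows essentially the same route as the paper. The paper also reduces to $g_x = \e_d$ by composing with an isometry between $(\mathbb{R}^d, g_x)$ and $(\mathbb{R}^d,\e_d)$. It then takes the SVD $T = RSQ$ (your polar decomposition, with $U = RQ$), bounds $(Le_i, Re_i) \le 1$ by Cauchy--Schwarz to show that the in-plane factor is a global minimizer, and gets orientation from positive definiteness of the symmetric factor. Your explicit completion of the partial isometry inside $\Pi$ for non-injective $T$ is slightly more careful than the paper, which tacitly assumes all singular values are positive.
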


\begin{proof}
    Fix $x \in Q$. To begin with, we assume $g_x = \e_d$. By the singular value decomposition, there exist $Q \in \Ort(\e_d)$, $S \in \mathcal{L}(\mathbb{R}^d)$, $R \in \Ort((\mathbb{R}^d, \e_d), (\Pi, e_D))$ and positive real numbers $\lambda_1, \dots, \lambda_d$ such that $T = RSQ$ and $Se_i = \lambda_i e_i$ for all $i$. If $L \in \Ort(\e_d, e_D)$, then
    \begin{multline*}
    |L - RS|^2 = \sum_{i = 1}^d |Le_i - RSe_i|^2 = \sum_{i = 1}^d 1 + \lambda_i^2 - 2\lambda_i(Le_i, Re_i) \\
    \ge \sum_{i = 1}^d 1 + \lambda_i^2 - 2 \lambda_i (Re_i, Re_i) = \sum_{i = 1}^d |Re_i - RSe_i|^2 = |R - RS|^2.
    \end{multline*}
    Since $LQ^{-1} \in \Ort(\e_d, e_D)$, we also have
    \[
    |L - T| = |LQ^{-1} - RS| \ge |R - RS| = |RQ - T|.
    \]
    Hence, $\dist(T, \Ort(\e_d, \e_D)) = |RQ - T|$. As $R \in \Ort((\mathbb{R}^d, \e_d), (\Pi, e_D))$, we conclude that
    \[
    \dist(T, \Ort((\mathbb{R}^d, \e_d), (\Pi, e_D))) \le |RQ - T| = \dist(T, \Ort(\e_d, \e_D)) \le \dist(T, \Ort((\mathbb{R}^d, \e_d), (\Pi, e_D))).
    \]
    Now assume $T$ is orientation preserving. Because $S$ is positive definite, it is orientation preserving as well. Therefore, $RQ$ must be orientation preserving, and we obtain
    \[
    \dist(T, \SO((\mathbb{R}^d, \e_d), (\Pi, e_D))) = \dist(T, \Ort(\e_d, \e_D)).
    \]
    If $g_x \neq \e_d$, we let $L \in \Ort(\e_d, g_x)$. Then $TL \in \Ort(\e_d, \e_D)$. We deduce from the previous step that
    \begin{multline*}
    \dist_{g_x, e_D}(T, \Ort(g_x, e_D)) = \dist(TL, \Ort(\e_d, e_D))  \\
    = \dist(TL, \Ort((\mathbb{R}^d, \e_d), (\Pi, \e_D))) = \dist_{g_x, e_D}(T, \Ort((\mathbb{R}^d, g_x), (\Pi, \e_D))).
    \end{multline*}
    The claim \eqref{eq:oriented_in_plane_approximation} follows similarly.
\end{proof}

\begin{lemma} \label{lm:stability_of_orientation_preservation}
    Let $\Pi_0, \Pi \subset \mathbb{R}^D$ be $d$-dimensional oriented subspaces. There exists $\varepsilon > 0$ depending only on $d$ and $D$ such that the restriction of $P_{\Pi_0}$ to $\Pi$ is orientation preserving if $|\Pi_0^\perp - \Pi^\perp| < \varepsilon$.
\end{lemma}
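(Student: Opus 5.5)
We work directly with the definitions of orientation and of $|\Pi_0^\perp - \Pi^\perp|$. Set $r := D - d$. Fix a positively oriented orthonormal basis $(v_1, \dots, v_d)$ of $\Pi$, and extend it by a positively oriented orthonormal basis $(w_1, \dots, w_r)$ of $\Pi^\perp$; the pair forms a positively oriented orthonormal basis of $\mathbb{R}^D$. Choose a positively oriented orthonormal basis $(w_1', \dots, w_r')$ of $\Pi_0^\perp$ with $\sum_i |w_i - w_i'|^2$ close to $|\Pi_0^\perp - \Pi^\perp|^2$.

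The restriction $P_{\Pi_0}|_\Pi$ is orientation preserving exactly when $(P_{\Pi_0}v_1, \dots, P_{\Pi_0}v_d)$ is a positively oriented basis of $\Pi_0$. By the definition of the orientation on $\Pi_0^\perp$, this holds if and only if
\[
\det\bigl(P_{\Pi_0}v_1, \dots, P_{\Pi_0}v_d, w_1', \dots, w_r'\bigr) > 0,
\]
where the determinant is taken in $\mathbb{R}^D$. The columns with $i \ge d+1$ already form an orthonormal basis of $\Pi_0^\perp$. Since $P_{\Pi_0}v_k = v_k - \sum_i (v_k, w_i')w_i'$ and subtracting multiples of other columns does not change the determinant, this equals
\[
\Delta := \det\bigl(v_1, \dots, v_d, w_1', \dots, w_r'\bigr).
\]
Hence it suffices to show $\Delta > 0$ when $|\Pi_0^\perp - \Pi^\perp|$ is small.

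Compare $\Delta$ with $\det(v_1, \dots, v_d, w_1, \dots, w_r) = 1$. The determinant is multilinear, and every column has norm at most $1$ (the columns $v_k, w_i, w_i'$ are unit vectors). Replacing the $w_i$ by the $w_i'$ one column at a time therefore gives, by Hadamard's inequality,
\[
|\Delta - 1| \le \sum_{i=1}^r |w_i - w_i'| \le \sqrt{r}\Bigl(\sum_{i=1}^r |w_i - w_i'|^2\Bigr)^{1/2}.
\]
Suppose $|\Pi_0^\perp - \Pi^\perp| < \varepsilon := 1/(2\sqrt{r})$. We may then choose the bases so that the right-hand side is below $1/2$. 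This gives $\Delta \ge 1/2 > 0$, and the claim follows with $\varepsilon$ depending only on $D - d$.

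The main point needing care is that the infimum defining $|\Pi_0^\perp - \Pi^\perp|$ ranges over arbitrary positively oriented bases, not orthonormal ones. A near-minimizing pair might therefore not be orthonormal, and the Hadamard bound uses unit columns.

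To handle this, work with general positively oriented bases $(w_i)$, $(w_i')$ of $\Pi^\perp$ and $\Pi_0^\perp$ that are close in the sum-of-squares sense. The sign of $\det(v, w)$ is positive for any positively oriented $w$, so $\det(v, w) > 0$. One then has to argue that this determinant is not too small relative to $|\det(v,w) - \det(v,w')|$. Making that comparison uniform is the delicate part. The cleanest route is to note that the infimum may be taken over orthonormal bases, and to use that convention, as the proof of the preceding proposition does implicitly when it takes the infimum over orthonormal bases. If that convention is contested, first apply Gram–Schmidt to $w'$, which is continuous near orthonormal frames, to reduce to the orthonormal case.
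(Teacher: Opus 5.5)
Your argument is correct, but it takes a different route from the paper. The paper rotates so that $\Pi = \lspan(e_1,\dots,e_d)$ and works with the projected vectors themselves. It shows $|P_{\Pi_0}e_i - e_i| \le |\Pi_0^\perp - \Pi^\perp|$. A Gram-matrix estimate then gives linear independence of $(P_{\Pi_0}e_1,\dots,P_{\Pi_0}e_d)$ once $|\Pi_0^\perp - \Pi^\perp| < 1/(2d)$. Finally, it gets the sign because the frame $(P_{\Pi_0}e_1,\dots,P_{\Pi_0}e_d,v_{d+1},\dots,v_D)$ lies within $\sqrt{d+1}\,|\Pi_0^\perp - \Pi^\perp|$ of the standard frame, appealing to continuity of the determinant with an unspecified threshold.

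Your observation that $P_{\Pi_0}v_k - v_k \in \lspan(w_1',\dots,w_r')$, hence $\det(P_{\Pi_0}v, w') = \det(v, w')$ by column operations, removes the projection from the problem entirely. Independence and orientation come out of a single determinant, and telescoping plus Hadamard gives the explicit threshold $\varepsilon = 1/(2\sqrt{D-d})$, which does not even depend on $d$. The paper's computation yields the extra information that $P_{\Pi_0}$ nearly preserves the frame of $\Pi$, but that is not needed for this lemma.

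One bookkeeping point: the infimum defining $|\Pi_0^\perp - \Pi^\perp|$ is over pairs of bases, so fixing $(w_i)$ first needs a word. Either choose a near-optimal pair $(w, w')$ first and then $(v_k)$, or note the following invariance. Replacing $w_j$ by $\sum_i A_{ij}w_i$ and $w_j'$ by $\sum_i A_{ij}w_i'$ with $A \in \SO(\e_r)$ preserves orientations and leaves $\sum_i |w_i - w_i'|^2$ unchanged.

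You should drop the Gram--Schmidt fallback in your last paragraph. The orthonormal convention is forced, not merely convenient. If the infimum ran over arbitrary positively oriented bases, scaling both bases by $t \to 0$ would give $|\Pi_0^\perp - \Pi^\perp| = 0$ for every pair of planes. The lemma would then be false: take $\Pi_0 = \Pi$ with reversed orientation, so $P_{\Pi_0}|_\Pi$ is the identity and reverses orientation. Under that reading, near-minimizing pairs are nowhere near orthonormal frames, so no Gram--Schmidt reduction can help. The paper's own proof likewise takes orthonormal minimizing bases.
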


\begin{proof}
    Let $(v_{d + 1}, \dots, v_D)$ and $(w_{d + 1}, \dots, w_D)$ be positively oriented orthonormal bases of $\Pi_0^\perp$ and $\Pi^\perp$, respectively, such that $|\Pi_0^\perp - \Pi^\perp|^2 = \sum_{j = d + 1}^D |v_j - w_j|^2$. Using an orthogonal transformation, we may assume $w_j = e_j$ for $j = d + 1, \dots, D$ so that $\Pi = \lspan(e_1, \dots, e_d)$. We denote the projection map simply by $P$. Our goal is to prove that $(Pe_1, \dots, Pe_d, v_{d + 1}, \dots, v_D)$ is a positively oriented basis of $\mathbb{R}^D$ if $|\Pi_0^\perp - \Pi^\perp|$ is sufficiently small. We start with the proof of linear independence. For $i = 1, \dots, d$ we have
    \begin{equation} \label{eq:distance_from_canonical_basis}
        |e_i - Pe_i|^2 = \sum_{j = d + 1}^D |(e_i, v_j)|^2 = \sum_{j = d + 1}^D |(e_i, v_j - e_j)|^2 \le \sum_{j = d + 1}^D |v_j - e_j|^2 = |\Pi_0^\perp - \Pi^\perp|^2.
    \end{equation}
    Therefore,
    \[
    |(Pe_i, Pe_j) - \delta_{ij}| = |(Pe_i - e_i, Pe_j) + (e_i, Pe_j - e_j)| \le 2 |\Pi_0^\perp - \Pi^\perp| \quad i, j = 1, \dots, d.
    \]
    If $a_1, \dots, a_d \in \mathbb{R}$, then
    \[
    \Big|\sum_{i = 1}^d a_i Pe_i\Big|^2 = \sum_{i = 1}^d |a_i|^2 + \sum_{i,j = 1}^d a_i a_j((Pe_i, Pe_j) - \delta_{ij}).
    \]
    If $\sum_{i = 1}^d a_i Pe_i = 0$, then
    \[
    \sum_{i = 1}^d |a_i|^2 \le 2 |\Pi_0^\perp - \Pi^\perp| \Big(\sum_{i = 1}^d |a_i|\Big)^2\le 2d |\Pi_0^\perp - \Pi^\perp| \sum_{i = 1}^d |a_i|^2.
    \]
    Hence, we conclude that $a_i = 0$ for all $i$, and $(Pe_1, \dots, Pe_d)$ is linearly independent if $|\Pi_0^\perp - \Pi^\perp| < 1/2d$. We set $v_i = Pe_i$ for $i = 1, \dots, d$. It now follows that $(v_1, \dots, v_D)$ is a basis of $\mathbb{R}^D$ and \eqref{eq:distance_from_canonical_basis} implies $\sum_{i = 1}^D |v_j - e_j|^2 \le (d + 1) |\Pi_0^\perp - \Pi^\perp|^2$. Since $(e_1, \dots, e_D)$ is positively oriented, so is $(v_1, \dots, v_D)$ granted $|\Pi_0^\perp - \Pi^\perp|$ is sufficiently small.
\end{proof}

We are finally ready to prove the main theorem of this section.

\begin{proof}[Proof of Theorem \ref{thm:codimension1_compact_rigidity}]
    Roughly speaking, our strategy will be to project $\bar{u}$ to a linear space and to apply the non-Euclidean rigidity theorem \ref{thm:noneuclidean_rigidity}. In order to estimate the error due to the projection, we need to bound the variation of the tangent spaces $D\bar{u}(x)(\mathbb{R}^d)$ as $x$ ranges over $Q$.
    
    We shall denote $D\bar{u}(x)(\mathbb{R}^d)$ simply by $\Pi_x$. The subspace of $\mathbb{R}^D$ canonically isomorphic to $T_q\iota(N)$ is denoted by $V_q$. We claim that
    \begin{equation} \label{eq:normal_plane_variation}
        \int_Q \int_Q |\Pi_x^\perp - \Pi_y^\perp|^p \, dx \, dy \le C \diam^p(Q) |Q| \mathcal{E}(u).
    \end{equation}
    Since $\Pi_x^\perp = \lspan(\bar{\nu}_u(x)) \oplus V_{\bar{u}(x)}^\perp$ for a.e. $x \in Q$, we have
    \[
    |\Pi_x^\perp - \Pi_y^\perp|^2 \le |\bar{\nu}_u(x) - \bar{\nu}_u(y)|^2 + |V_{\bar{u}(x)}^\perp - V_{\bar{u}(y)}^\perp|^2 \quad \text{for a.e. } x, y \in Q.
    \]
    Set $r := D - d - 1$. Let $U \subset \mathbb{R}^D$ be a convex open set and let $n_i \in C^\infty(U; \mathbb{R}^D)$ for $i = 1, \dots, r$ such that $(n_1(q), \dots, n_r(q))$ is a positively oriented orthonormal basis of $V_{q}^\perp$ for all $q \in U \cap \iota(N)$, and $|Dn_i|$ is uniformly bounded in $U$ for all $i$. For $q, q' \in U \cap \iota(N)$, we have
    \[
    |V_q^\perp - V_{q'}^\perp|^2 \le \sum_{i = 1}^r |n_i(q) - n_i(q')|^2 \le Cr |q - q'|^2,
    \]
    where $C$ is a uniform bound for $|Dn_i|$ in $U$. Since $\iota(N)$ is compact, a covering argument yields
    \[
    |V_q^\perp - V_{q'}^\perp| \le C |q - q'| \quad \text{for all } q, q' \in \iota(N)
    \]
    with $C$ depending on $d$, $D$, $N$ and $\iota$. By Poincaré's inequality,
    \begin{equation} \label{eq:planar_variation}
        \int_Q \int_Q |V_{\bar{u}(x)}^\perp - V_{\bar{u}(y)}^\perp|^p \, dx \, dy \le C \int_Q \int_Q |\bar{u}(x) - \bar{u}(y)|^p \, dx \, dy \le C \diam^p(Q) |Q| \int_Q |D\bar{u}|^p \, dx.
    \end{equation}
    Applying Poincaré's inequality to $\bar{\nu}_u$ and using Proposition \ref{pr:derivative_normal_estimate} gives
    \begin{equation} \label{eq:normal_variation}
        \int_Q \int_Q |\bar{\nu}_u(x) - \bar{\nu}_u(y)|^p \, dx \, dy \le C \diam^p(Q) |Q| \int_Q |D\bar{\nu}_u|^p \, dx \le C \diam^p(Q) |Q| \mathcal{E}(u).
    \end{equation}
    The bound \eqref{eq:normal_plane_variation} now follows from \eqref{eq:planar_variation} and \eqref{eq:normal_variation} with the help of Lemma \ref{lm:change_in_volume}.

    By Chebyshev's inequality applied to \eqref{eq:normal_plane_variation}, there exists $x_0 \in Q$ with $\dim \Pi_{x_0} = d$ such that
    \begin{equation} \label{eq:poincare_oriented_distance}
        \int_Q |\Pi_x^\perp - \Pi_{x_0}^\perp|^p \, dx \le 2C \diam^p(Q) \mathcal{E}(u).
    \end{equation}
    Let $T \in \SO((\mathbb{R}^d, \e_d), (\Pi_{x_0}, \e_D))$ and set $v := T^{-1} \circ P_{\Pi_{x_0}} \circ \bar{u}$. Then $v \in W^{1, p}(Q; \mathbb{R}^d)$, and we can apply Theorem \ref{thm:noneuclidean_rigidity} to obtain $\bar{R} \in \SO(g_{x_0}, \e_d)$ such that
    \begin{equation} \label{eq:rigidity_applied_to_v_again}
        \int_Q |Dv - \bar{R}|_{g, \e_d}^p \, dx \le C \left(|Q| \left(\osc_Q g\right)^p + \int_Q \dist_{g, \e_d}^p(Dv, \SO(g, \e_d)) \, dx\right).
    \end{equation}
    Set $R := T\bar{R}$. Then $R \in \SO((\mathbb{R}^d, g_{x_0}), (\Pi_{x_0}, \e_D))$, and by \eqref{eq:error_in_projection}, we have
    \begin{equation} \label{eq:change_from_v_to_u}
        \begin{aligned}
            |D\bar{u}(x) - R|_{g_x, \e_D} &\le |D\bar{u}(x) - P_{\Pi_{x_0}} D\bar{u}(x)|_{g_x, \e_D} + |P_{\Pi_{x_0}} D\bar{u}(x) - R|_{g_x, \e_D} \\
            &\le |\Pi_x^\perp - \Pi_{x_0}^\perp| |D\bar{u}(x)|_{g_x, \e_D} + |Dv(x) - \bar{R}|_{g_x, \e_d} \\
            &\le \sqrt{d} |\Pi_x^\perp - \Pi_{x_0}^\perp| + 2\sqrt{D - d} \dist_{g_x, \e_D}(D\bar{u}(x), \Ort(g, \e_D)) + |Dv(x) - \bar{R}|_{g_x, \e_d}
        \end{aligned}
    \end{equation}
    for a.e. $x \in Q$. Furthermore, \eqref{eq:bound_for_distance_from_oriented_rotations} gives
    \begin{multline} \label{eq:planar_to_spacial_distance}
        \dist_{g_x, \e_d}(Dv(x), \SO(g_x, \e_d)) = \dist_{g_x, \e_D}(P_{\Pi_{x_0}} D\bar{u}(x), \SO((\mathbb{R}^d, g_x), (\Pi_{x_0}, \e_D)) \\
        \le \dist_{g_x, \e_D}(D\bar{u}(x), \Ort(g_x, \e_D)) + C|\Pi_x^\perp - \Pi_{x_0}^\perp|
    \end{multline}
    for a.e. $x \in Q$. Hence, \eqref{eq:rigidity_applied_to_v_again}, \eqref{eq:change_from_v_to_u}, \eqref{eq:planar_to_spacial_distance} and Lemma \ref{lm:change_in_volume} imply
    \[
    \int_Q |D\bar{u} - R|_{g, \e_D}^p \, dx \le C \left(|Q| \left(\osc_Q g\right)^p + E_s(u) + \int_Q |\Pi_x^\perp - \Pi_{x_0}^\perp|^p \, dx\right).
    \]
    Finally, the claim follows from \eqref{eq:poincare_oriented_distance}.
\end{proof}

\section{Asymptotic rigidity}

In this section, we prove our main result. Compare with \cite[Theorem 1.1]{AKM2}.

\begin{theorem} \label{thm:asymptotic_rigidity}
        Let $(M, g)$ and $(N, h)$ be oriented compact Riemannian manifolds of dimensions $d$ and $d + 1$, respectively. Let $\iota : N \to \mathbb{R}^D$ be a smooth isometric embedding. Assume $(u_k) \subset \Imm_p(M; N)$ satisfies
        \begin{equation} \label{eq:vanishing_and_bounded_energies}
            \lim_{k \to \infty} E_s(u_k) = 0, \quad \limsup_{k \to \infty} E_b(u_k) < \infty.
        \end{equation}
        Then there exists a subsequence $(u_{k_j})$ and $u \in \Imm_p(M; N)$ such that
        \begin{equation} \label{eq:convergence_to_local_isometry}
            \bar{u}_{k_j} \to \bar{u} \text{ in } W^{1, p}(M; \mathbb{R}^D), \quad du_x \in \Ort((T_xM, g_x), (T_{u(x)}N, h_{u(x)})) \text{ for a.e. } x \in M.
        \end{equation}
        Furthermore, if $S : TM \rightarrow TM$ is the reference shape operator with $|S|_g \in L^{p'}(M)$, where $p'$ is the Hölder conjugate of $p$, and
        \begin{equation}
            \lim_{k \to \infty} E_b^S(u_k) = 0,
        \end{equation}
        then $S = S_u$ a.e. in $M$.
\end{theorem}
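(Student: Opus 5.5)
The plan follows the compactness strategy of \cite[Theorem 4.1]{FJM}, combined with the local estimate of Theorem \ref{thm:codimension1_compact_rigidity}.

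\textbf{Step 1: compactness.} By \eqref{eq:vanishing_and_bounded_energies}, $E_s(u_k)$ is bounded. Since $\dist(du,\Ort)\ge |du|-\sqrt d$, $\int_M|d\bar u_k|^p$ is bounded, and $\bar u_k$ takes values in the compact set $\iota(N)$. Hence $\bar u_k$ is bounded in $W^{1,p}$ and $\mathcal{E}(u_k)$ is bounded. Proposition \ref{pr:derivative_normal_estimate}, applied in charts, then bounds $\bar\nu_{u_k}$ in $W^{1,p}$. We pass to a subsequence with $\bar u_k\rightharpoonup \bar u$ and $\bar\nu_{u_k}\rightharpoonup \nu$ weakly in $W^{1,p}$, both strongly in $L^p$ and a.e. Then $\bar u\in\iota(N)$ a.e., so $u:=\iota^{-1}\circ\bar u\in W^{1,p}(M;N)$.

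\textbf{Step 2: strong convergence of $D\bar u_k$.} This is the key step. Cover $M$ by finitely many charts in which the metric satisfies \eqref{eq:comparable_metrics} with a uniform $\lambda$. For a scale $\delta>0$, tile each chart by cubes $Q$ of side $\delta$, so that $\osc_Q g\le C\delta$. On each cube, Theorem \ref{thm:codimension1_compact_rigidity} gives a constant matrix $R_{k,Q}$ with
\[
\int_Q|D\bar u_k-R_{k,Q}|^p\le C\bigl(\delta^{d+p}+E_s(u_k)|_Q+\delta^p\mathcal{E}(u_k)|_Q\bigr).
\]
Summing over cubes, the piecewise constant field $A_{k,\delta}$ satisfies $\|D\bar u_k-A_{k,\delta}\|_{L^p}^p\le C(\delta^p+E_s(u_k)+\delta^p\sup_k\mathcal{E}(u_k))$. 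For fixed $\delta$ there are finitely many cubes, and the $R_{k,Q}$ lie in a compact set, so a diagonal subsequence gives $A_{k,\delta}\to A_\delta$ as $k\to\infty$. Then
\[
\limsup_{k,l}\|D\bar u_k-D\bar u_l\|_{L^p}\le C\delta ,
\]
and letting $\delta\to0$ along a further diagonal sequence shows that $D\bar u_k$ is Cauchy in $L^p$. So $\bar u_k\to\bar u$ strongly in $W^{1,p}$.

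\textbf{Step 3: isometry and immersion.} Passing to a.e. convergence, $\dist(du_x,\Ort)=\lim\dist(d(u_k)_x,\Ort)=0$ a.e., by Fatou and $E_s(u_k)\to0$. Hence $du_x$ is an isometry and has rank $d$ a.e. The normals $\bar\nu_{u_k}(x)$ are determined continuously, by orthogonality and orientation, from $D\bar u_k(x)$ and $V_{\bar u_k(x)}$ wherever the limit has full rank. Therefore $\bar\nu_{u_k}\to\bar\nu_u$ a.e., so $\nu=\bar\nu_u\in W^{1,p}$ and $u\in\Imm_p(M;N)$, which proves \eqref{eq:convergence_to_local_isometry}.

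\textbf{Step 4: the shape operator.} Here $D\bar\nu_{u_k}\rightharpoonup D\bar\nu_u$ only weakly. Since $P_{u_k}\to P_u$ a.e. and boundedly, we get $P_{u_k}D\bar\nu_{u_k}\rightharpoonup P_uD\bar\nu_u=D\bar u\,S_u$ weakly in $L^1$. This uses that the product of a boundedly a.e. convergent sequence with a weakly convergent one converges weakly, via Egorov. On the other hand, $D\bar u_k S\to D\bar u\,S$ strongly in $L^1$, because $D\bar u_k\to D\bar u$ in $L^p$ and $|S|\in L^{p'}$. Since $E_b^S(u_k)\to0$, the difference $D\bar u_k(S_{u_k}-S)\to0$ in $L^p$. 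Hence $D\bar u\,(S_u-S)=0$ a.e., and injectivity of $du$ gives $S=S_u$.

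The main obstacle is Step 2: the diagonal argument must be organized so that $\delta\to0$ interacts correctly with the subsequence. A simpler alternative is to use the Fréchet–Kolmogorov-type criterion, namely that $D\bar u_k$ is, uniformly in $k$, $L^p$-close to functions that are constant on cubes of side $\delta$; this yields precompactness in $L^p$ directly.
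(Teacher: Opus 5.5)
Your proof is correct, and it differs from the paper's in one step. Steps 3 and 4 are essentially the paper's argument. For the normal, the paper shows $(\bar\nu_{u_k}(x),n_i(\bar u(x)))\to0$ and $(\bar\nu_{u_k}(x),D\bar u(x)v)\to0$, and then fixes the sign by orientation; this is exactly the continuity of the oriented normal that you invoke. For the shape operator, the paper uses the same weak--strong pairing, testing against $\eta$ with $P_{u_k}\eta\to P_u\eta$ strongly.

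The real difference is Step 2. In Lemma \ref{lm:existence_of_the_limit} the paper lets the grid scale $l/t_k$ shrink with $k$. It then verifies the Fr\'echet--Kolmogorov criterion for the piecewise constant maps $G_k$ by chaining comparisons between neighbouring cubes through the enlarged cubes $3Q'$. A translation by $\zeta$ crosses about $t_k\|\zeta\|_{\ell_\infty}/l$ cubes, so the stretching energy is amplified by $(t_k/l)^p$. This is why $t_k$ must be tuned so that $t_k^pE_s(u_k)$ stays bounded.

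You keep $\delta$ fixed instead. After summing the local estimate over disjoint cubes, $E_s(u_k)$ enters only additively. Compactness then comes from the fact that, for fixed $\delta$, the approximants $A_{k,\delta}$ range over a bounded subset of a finite-dimensional space; this is a total-boundedness argument. The diagonal organisation you flag as the main obstacle is routine. Take nested subsequences along which $A_{k,1/n}$ converges, then the diagonal sequence. Its tail lies in the $n$-th subsequence, so $\limsup_{i,j}\|D\bar u_{k_i}-D\bar u_{k_j}\|_{L^p}\le C/n$ for every $n$.

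Your route is shorter: it needs no translation estimates and no coupling between the scale and $k$. The paper's route follows the template of \cite[Theorem 4.1]{FJM}, where a $k$-dependent cube size is forced by the thin-domain geometry. It also yields an explicit uniform modulus of $L^p$-translation for the approximants, but nothing in the present statement requires that.
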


Note that, in contrast to \cite[Theorem 1.1]{AKM2}, we do not prove the regularity of the limiting map. Since the main difficulty of Theorem  \ref{thm:asymptotic_rigidity} is establishing the existence of an isometric limit, we prove this as a separate lemma.

\begin{lemma} \label{lm:existence_of_the_limit}
    Let $(M, g)$, $(N, h)$ and $\iota$ be as in Theorem \ref{thm:asymptotic_rigidity}. Assume $(u_k) \subset \Imm_p(M; N)$ satisfies \eqref{eq:vanishing_and_bounded_energies}. Then there exists a subsequence $(u_{k_j})$ and $u \in W^{1, p}(M; N)$ such that
    \[
    \bar{u}_{k_j} \to \bar{u} \text{ in } W^{1, p}(M; \mathbb{R}^D), \quad du_x \in \Ort((T_xM, g_x), (T_{u(x)}N, h_{u(x)})) \text{ for a.e. } x \in M.
    \]
\end{lemma}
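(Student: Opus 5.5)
The argument follows the compactness strategy of \cite[Theorem 4.1]{FJM}, with Theorem \ref{thm:codimension1_compact_rigidity} applied on small coordinate cubes. First we extract a weak limit. Since $E_s(u_k) \to 0$ and $\iota(N)$ is bounded, $\bar{u}_k$ is bounded in $W^{1,p}(M;\mathbb{R}^D)$: indeed $|D\bar{u}_k|_{g,\e_D} \le \sqrt{d} + \dist(du_k, \Ort)$ pointwise. By Rellich we pass to a subsequence with $\bar{u}_k \rightharpoonup \bar{u}$ weakly in $W^{1,p}$ and $\bar u_k\to\bar u$ strongly in $L^p$ and a.e. Since $\iota(N)$ is closed, $\bar{u}$ takes values in $\iota(N)$, so $u := \iota^{-1}\circ\bar{u} \in W^{1,p}(M;N)$. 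Moreover $\mathcal{E}(u_k)$ is bounded by a constant $\Lambda$, because $E_b(u_k)$ is bounded by \eqref{eq:vanishing_and_bounded_energies}.

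The main step is to upgrade weak convergence of $D\bar u_k$ to strong convergence in $L^p$. Cover $M$ by finitely many charts in which $g$ satisfies \eqref{eq:comparable_metrics} with a uniform $\lambda$ and is uniformly continuous. Fix $\delta>0$ and partition each chart domain, up to a null set, into cubes $Q$ of side $\delta$. On each cube Theorem \ref{thm:codimension1_compact_rigidity} gives constant matrices $R_{k,Q}$ such that
\[
\int_Q |D\bar{u}_k - R_{k,Q}|^p \le C\bigl(|Q|\,\omega(\delta)^p + E_s(u_k;Q) + \delta^p \mathcal{E}(u_k;Q)\bigr),
\]
where $\omega$ is the modulus of continuity of $g$ and $C$ does not depend on $\delta$ or $k$. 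Define the piecewise-constant map $A_k := \sum_Q R_{k,Q}\chi_Q$. Summing over the cubes gives
\[
\|D\bar{u}_k - A_k\|_{L^p}^p \le C\bigl(\omega(\delta)^p + E_s(u_k) + \delta^p\Lambda\bigr).
\]
For fixed $\delta$ there are finitely many cubes and the $R_{k,Q}$ are bounded, so along a further subsequence $A_k \to A_\delta$ strongly in $L^p$. Hence $\limsup_{k,l}\|D\bar u_k - D\bar u_l\|_{L^p}\le C(\omega(\delta)+\delta\Lambda^{1/p})$. A diagonal argument over $\delta = 1/m$ then produces a subsequence along which $(D\bar{u}_k)$ is Cauchy in $L^p$. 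Combined with weak convergence, this gives $\bar{u}_k\to\bar{u}$ strongly in $W^{1,p}$.

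The isometry property follows from strong convergence. Passing to a further subsequence, $D\bar u_k\to D\bar u$ a.e. Also $\dist(du_k,\Ort)\to 0$ in $L^p$, hence a.e. along a subsequence. Since $\bar u_k\to\bar u$ a.e. and the set $\Ort(g_x,\e_D)$ is closed, $D\bar u(x)$ is an isometry a.e. Its image lies in $V_{\bar u(x)}$ by the definition of $W^{1,p}(M;N)$, so $du_x \in \Ort((T_xM,g_x),(T_{u(x)}N,h_{u(x)}))$.

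I expect the main obstacle to be the strong-compactness step. There one has to check that the constant in Theorem \ref{thm:codimension1_compact_rigidity} is uniform over all cubes of the charts, that the oscillation term is controlled uniformly by the continuity of $g$, and that charts, volume forms, and cubes up to the boundary of the chart images are handled consistently (Lemma \ref{lm:change_in_volume}). The diagonal extraction itself is routine.
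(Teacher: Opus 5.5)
Your proof is correct, but the compactness step takes a different route from the paper's.

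The paper lets the mesh shrink with $k$. It uses cubes of side $l/t_k$, with $t_k\to\infty$ chosen so that $t_k^pE_s(u_k)$ stays bounded. This makes the piecewise-constant isometries $G_k$ satisfy $\|D\bar v_k-G_k\|_{L^p}\to 0$ exactly. Since the number of cubes then blows up, compactness of $(G_k)$ has to come from the Fr\'echet--Kolmogorov theorem. That in turn requires:
\begin{enumerate}
\item applying Theorem \ref{thm:codimension1_compact_rigidity} on the enlarged cubes $3Q'$ to compare neighbouring rotations;
\item chaining these comparisons along a translation $\zeta$ and counting overlaps;
\item using the Lipschitz bound on $g$ to keep the accumulated oscillation errors under control.
\end{enumerate}

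You instead freeze the mesh at size $\delta$. For each $\delta$ the approximants $A_k^\delta$ then range over a bounded subset of a finite-dimensional space, so Bolzano--Weierstrass replaces Fr\'echet--Kolmogorov. The price is a residual error $C(\omega(\delta)+\delta\Lambda^{1/p})$ that does not vanish as $k\to\infty$, which you remove by the diagonal extraction over $\delta=1/m$. This works because the terms $|Q|(\osc_Q g)^p$ and $\diam^p(Q)\mathcal{E}$ in the local estimate tend to zero with the cube size uniformly in $k$, while $E_s(u_k)\to0$ handles the remaining term.

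What each route buys:
\begin{itemize}
\item Your argument is shorter. It needs no neighbour comparisons, no coupling of the mesh with the rate at which $E_s(u_k)\to0$, and only a modulus of continuity of $g$.
\item The paper's argument mirrors the FJM thin-plate compactness proof and produces a single sequence of piecewise isometries converging to $D\bar u$. That template still applies in dimension-reduction problems where the natural cube size must shrink with $k$ and no fixed finite-dimensional reduction exists.
\end{itemize}

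Your handling of the weak limit (Rellich, closedness of $\iota(N)$) and of the isometry property via a.e.\ convergence is also fine. It is slightly more direct than the paper's Cauchy argument with convergent averages.
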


\begin{proof}
    Given $q \in M$, let $(U, \varphi)$ be a chart on $M$ with $q \in M$ such that $Q := \varphi(U)$ is a bounded open cube and set $v_k := u_k \circ \varphi^{-1}$. We denote the pushforward of $g$ by $\varphi$ by the same letter, and we assume without loss of generality that
    \[
    \frac{1}{\lambda} \e_d \le g \le \lambda \e_d \quad \text{in } Q
    \]
    for some $\lambda > 0$ and that $g$ is Lipschitz continuous in $Q$. We prove the claim in $U$. Since $q$ is arbitrary and $M$ is compact, the general result follows easily.

    We briefly outline the proof. In its essentials, the proof is very similar to the proof of the compactness result in \cite[Theorem 4.1]{FJM}. We use the rigidity theorem \ref{thm:codimension1_compact_rigidity} to approximate $D\bar{v}_k$ by an almost isometric piecewise constant map $G_k$. Then, using the hypotheses \eqref{eq:vanishing_and_bounded_energies}, we show that $(G_k)$ satisfies the Fréchet-Kolmogorov theorem and, therefore, prove that it has a convergent subsequence in $L^p(Q; \mathcal{L}(\mathbb{R}^d, \mathbb{R}^D))$, denoted by $G$. Finally, we show that the corresponding subsequence of $(\bar{v}_k)$ converges to a Sobolev map $\bar{v}$ with $D\bar{v} = G$. The desired limit is then given by $u := \iota^{-1} \circ \bar{v} \circ \varphi$.

    Let $l$ be the side length of $Q$. We partition $Q$ into identical open cubes with side lengths $l/t_k$, where $t_k$ is a natural number to be determined. We denote the partition by $\mathcal{P}_k$. If $Q' \in \mathcal{P}_k$, then, by Theorem \ref{thm:codimension1_compact_rigidity}, there exist $x_0 \in Q'$ and $R \in \Ort(g_{x_0}, \e_D)$ such that
    \begin{equation} \label{eq:piecewise_approximation}
        \int_{Q'} |D\bar{v}_k - R|_{g, \e_D}^p \, dx \le C \left(|Q'| \left(\osc_{Q'} g\right)^p + E_s(v_k, Q') + \diam^p(Q') \mathcal{E}(v_k, Q')\right),
    \end{equation}
    where $E_s(v_k, Q')$ and $\mathcal{E}(v_k, Q')$ denote the energies of $v_k$ restricted to $Q'$. We set $G_k(x) := R$ for all $x \in Q'$. By applying the rigidity theorem to $\bar{v}_k$ in each cube in $\mathcal{P}_k$, we obtain a piecewise constant map $G_k : Q \rightarrow \mathcal{L}(\mathbb{R}^d, \mathbb{R}^D)$.
    
    For $Q' \in \mathcal{P}_k$, we denote by $3Q'$ the concentric cube with three times the side length. We also denote the center of $Q'$ by $c_{Q'}$. Let $Q'' \in \mathcal{P}_k$ and assume $Q'' \subset 3Q' \subset Q$. We shall estimate $|G_k(c_{Q'}) - G_k(c_{Q''})|$. We apply the rigidity estimate to $\bar{v}_k$ in $3Q'$ to obtain an approximation $R'$ for $D\bar{v}_k$. Then
    \begin{multline*}
        \int_{Q'} |G_k(c_{Q''}) - R'|_{g, \e_D}^p \, dx \le C \left(\int_{Q''} |G_k(c_{Q''}) - D\bar{v}_k|_{g, \e_D}^p \, dx + \int_{3Q'} |R' - D\bar{v}_k|_{g, \e_D}^p \, dx\right) \\
        \le C\left(|3Q'| \left(\osc_{3Q'} g\right)^p + E_s(v_k, 3Q') + \diam^p(3Q') \mathcal{E}(v_k, 3Q')\right).
    \end{multline*}
    Clearly, we can also take $Q'' = Q'$. Consequently, by the triangle inequality,
    \[ 
    \int_{Q'} |G_k(c_{Q''}) - G_k(c_{Q'})|_{g, \e_D}^p \, dx \le C \left(|3Q'| \left(\osc_{3Q'} g\right)^p + E_s(v_k, 3Q') + \diam^p(3Q') \mathcal{E}(v_k, 3Q')\right).
    \]
    If $\zeta \in \mathbb{R}^d$ satisfies $\|\zeta\|_{\ell_\infty} \le l/t_k$, then the previous estimate implies more generally
    \begin{multline} \label{eq:simple_translation_vector}
        \int_{Q'} |G_k(x + \zeta) - G_k(x)|_{g, \e_D}^p \, dx \le C \sum_{Q'' \subset 3Q'} \int_{Q'} |G_k(c_{Q''}) - G_k(c_{Q'})|_{g, \e_D}^p \, dx \\
        \le C\left(|3Q'| \left(\osc_{3Q'} g\right)^p + E_s(v_k, 3Q') + \diam^p(3Q') \mathcal{E}(v_k, 3Q')\right).
    \end{multline}
    Next, we let $\zeta \in \mathbb{R}^d$ be arbitrary and set $m := \lfloor\|(t_k/l) \zeta\|_{\ell_\infty}\rfloor$. Define 
    \[
    \mathcal{G}_k(\zeta) := \{Q' \in \mathcal{P}_k : 3Q' \cup (\zeta + 3Q') \subset Q\}.
    \]
    If $Q' \in \mathcal{G}_k$, then there exist vectors $\zeta_0, \dots, \zeta_{m + 1}$ with $\|\zeta_j - \zeta_{j - 1}\|_{\ell_\infty} \le l/t_k$ such that $\zeta_0 = 0$, $\zeta_{m + 1} = \zeta$, and  $c_{Q'} + \zeta_j$ is the center of some cube $Q'_j$ in $\mathcal{P}_k$ for $j = 1, \dots, m$. Setting $Q'_0 := Q'$ and applying the triangle inequality gives
    \begin{multline} \label{eq:general_translation_vector}
        \int_{Q'} |G_k(x + \zeta) - G_k(x)|_{g, \e_D}^p \, dx \le (m + 1)^{p - 1} \sum_{j = 0}^m \int_{Q'} |G_k(x + \zeta_{j + 1}) - G_k(x + \zeta_j)|_{g, \e_D}^p \, dx \\
        \le C (m + 1)^{p - 1}\sum_{j = 0}^m \left(|3Q_j'| \left(\osc_{3Q_j'} g\right)^p + E_s(v_k, 3Q_j') + \diam^p(3Q_j') \mathcal{E}(v_k, 3Q_j')\right).
    \end{multline}
    Let $\mathcal{Q}_k(\zeta)$ be the union of all cubes in $\mathcal{G}_k(\zeta)$. We sum \eqref{eq:general_translation_vector} over all $Q' \in \mathcal{G}_k(\zeta)$ and note that, as $Q'$ ranges in $\mathcal{G}_k(\zeta)$, any cube in $\mathcal{P}_k$ appears at most $C(m + 1)$ times on the right-hand side of the inequality, where $C$ depends only on $d$. Hence,
    \begin{multline*}
        \int_{\mathcal{Q}_k(\zeta)} |G_k(x + \zeta) - G_k(x)|_{g, \e_D}^p \, dx \\
        \le C(m + 1)^p \left(|Q| \max_{Q' \in \mathcal{P}_k}\left(\osc_{3Q'} g\right)^p + E_s(v_k, Q) + \max_{Q' \in \mathcal{P}_k} \diam^p(3Q') \mathcal{E}(v_k, Q)\right).
    \end{multline*}
    We observe that
    \begin{equation} \label{eq:oscillation_and_diameter_bound}
        \osc_{3Q'} g \le CL\frac{l}{t_k}, \quad \diam(3Q') \le C \frac{l}{t_k} \quad \text{for all } Q' \in \mathcal{P}_k,
    \end{equation}
    where $C$ depends only on $d$, and $L$ is the Lipschitz constant of $g$. As a result,
    \begin{equation} \label{eq:bound_on_uk}
        \int_{\mathcal{Q}_k(\zeta)} |G_k(x + \zeta) - G_k(x)|_{g, \e_D}^p \, dx \le C \left(\|\zeta\|_{\ell_\infty} + \frac{l}{t_k}\right)^p \left(|Q| L^p + \left(\frac{t_k}{l}\right)^p E_s(v_k, Q) + \mathcal{E}(v_k, Q)\right).
    \end{equation}
    On the other hand, extending $G_k$ by 0 to $\mathbb{R}^d \setminus Q$, we see that
    \begin{equation} \label{eq:bound_outside_uk}
        \int_{\mathbb{R}^d \setminus \mathcal{Q}_k(\zeta)} |G_k(x + \zeta) - G_k(x)|_{g, \e_D}^p \, dx \le C |Q \setminus \mathcal{Q}_k(\zeta)|.
    \end{equation}
    It is clear that
    \[
    \lim_{\|\zeta\|_{\ell_\infty} \to 0} \limsup_{k \to \infty} |Q \setminus \mathcal{Q}_k(\zeta)| = 0.
    \]
    As $\varphi : (U, g) \rightarrow (Q, g)$ is an isometry, we have
    \[
    E_s(v_k, Q) = E_s(u_k, U), \quad \mathcal{E}(v_k, Q) = \mathcal{E}(u_k, U).
    \]
    Now, we choose a sequence $(t_k)$ tending to infinity such that $\limsup_{k \to \infty} t_k^p E_s(u_k, U) < \infty$. It follows from \eqref{eq:vanishing_and_bounded_energies} that
    \[
    \limsup_{k \to \infty} |Q| L^p + \left(\frac{t_k}{l}\right)^p E_s(v_k, Q) + \mathcal{E}(v_k, Q) < \infty.
    \]
    Consequently, \eqref{eq:bound_on_uk} and \eqref{eq:bound_outside_uk} imply
    \[
    \lim_{\|\zeta\|_{\ell_\infty} \to 0} \limsup_{k \to \infty} \int_{\mathbb{R}^d} |G_k(x + \zeta) - G_k(x)|_{g, \e_D}^p \, dx = 0.
    \]
    Hence, by the Fréchet-Kolmogorov theorem, there exists a subsequence $(G_{k_j})$ and a map $G : Q \rightarrow \mathcal{L}(\mathbb{R}^d, \mathbb{R}^D)$ such that
    \[
    \lim_{j \to \infty} \int_{Q} |G_{k_j} - G|_{g, \e_D}^p \, dx = 0.
    \]
    From \eqref{eq:vanishing_and_bounded_energies} and \eqref{eq:piecewise_approximation} we get $|D\bar{v}_k - G_k|_{g, \e_D} \to 0$ in $L^p(Q)$. Therefore, $|D\bar{v}_{k_j} - G|_{g, \e_D} \to 0$ in $L^p(Q)$ as $j \to \infty$. Clearly, $G(x) \in \Ort(g_x, \e_D)$ for a.e. $x \in Q$.
    
    In order to prove the convergence of the sequence $(\bar{v}_{k_j})$ we assume that the averages $(\bar{v}_{k_j})_Q$ converge in $\iota(N)$. As $\iota(N)$ is compact, there is no loss of generality. By the Poincaré inequality,
    \begin{multline*}
        \int_{Q} |\bar{v}_{k_j} - \bar{v}_{k_i}|_{g, \e_D}^p \, dx \le C\left(\int_Q |(\bar{v}_{k_j} - \bar{v}_{k_i}) - (\bar{v}_{k_j} - \bar{v}_{k_i})_Q|_{g, \e_D}^p + |(\bar{v}_{k_j} - \bar{v}_{k_i})_Q|_{g, \e_D}^p \, dx\right) \\
        \le C \left(\int_Q |D\bar{v}_{k_j} - D\bar{v}_{k_i}|_{g, \e_D}^p dx + |Q||(\bar{v}_{k_j})_Q - (\bar{v}_{k_i})_Q|^p\right).
    \end{multline*}
    Hence, $(\bar{v}_{k_j})$ is a Cauchy sequence. Let $\bar{v}$ be its limit. Clearly, $\bar{v} \in W^{1, p}(Q; \mathbb{R}^D)$ and $D\bar{v} = G$. Since $\bar{v}(x) \in \iota(N)$ for a.e. $x \in Q$, $v := \iota^{-1} \circ \bar{v}$ is well-defined. It follows that $v \in W^{1, p}(Q; N)$ and $dv_x \in \Ort((T_x\mathbb{R}^d, g_x), (T_{u(x)}N, h_{u(x)}))$ for a.e. $x \in Q$. Setting $u := v \circ \varphi$ concludes the proof.
\end{proof}

We are ready to prove Theorem \ref{thm:asymptotic_rigidity}.

\begin{proof}[Proof of Theorem \ref{thm:asymptotic_rigidity}]
    By Lemma \ref{lm:existence_of_the_limit}, there exist a subsequence of $(u_k)$, not relabeled, and a Sobolev map $u \in W^{1, p}(M; N)$ such that
    \[
    \bar{u}_k \to \bar{u} \text{ in } W^{1, p}(M; \mathbb{R}^D), \quad du_x \in \Ort((T_xM, g_x), (T_{u(x)}N, h_{u(x)})) \text{ for a.e. } x \in M.
    \]
    Since $\rank du_x = d$ a.e. in $M$, the oriented unit normal $\nu_u$ is well-defined a.e. in $M$. We show that $\bar{\nu}_u \in W^{1, p}(M; \mathbb{R}^D)$, which proves that $u \in \Imm_p(M; N)$. Without loss of generality, we assume $\bar{u}_k \to \bar{u}$ and $D\bar{u}_k \to D\bar{u}$ pointwise a.e. in $M$.

    We denote the subspace of $\mathbb{R}^D$ canonically isomorphic to $T_q\iota(N)$ by $V_q$. Set $r := D - d - 1$. Let $U \subset \mathbb{R}^D$ be open and let $n_i \in C^\infty(U; \mathbb{R}^D)$ for $i = 1, \dots, r$ such that $(n_1(q), \dots, n_r(q))$ is a positively oriented orthonormal basis of $V_{q}^\perp$ for all $q \in U \cap \iota(N)$. Assume
    \begin{equation} \label{eq:generic_convergence}
        \bar{u}_k(x) \to \bar{u}(x), \quad D\bar{u}_k(x) \to D\bar{u}(x), \quad D\bar{u}(x) \in \Ort(g_x, \e_D)
    \end{equation}
    for some $x \in \bar{u}^{-1}(U)$. Then
    \[
    \lim_{k \to \infty} (\bar{\nu}_{u_k}(x), n_i(\bar{u}(x)) - n_i(\bar{u}_k(x))) = \lim_{k \to \infty} (\bar{\nu}_{u_k}(x), n_i(\bar{u}_k(x))) = 0 \implies \lim_{k \to \infty} (\bar{\nu}_{u_k}(x), n_i(\bar{u}(x))) = 0.
    \]
    On the other hand, if $v \in \mathbb{R}^d$, then
    \[
    \lim_{k \to \infty} (\bar{\nu}_{u_k}(x), D(\bar{u} - \bar{u}_k)(x)v) = \lim_{k \to \infty} (\bar{\nu}_{u_k}(x), D\bar{u}_k(x)v) = 0 \implies \lim_{k \to \infty} (\bar{\nu}_{u_k}(x), D\bar{u}(x)v) = 0.
    \]
    Let $(v_1, \dots, v_d)$ be an orthonormal basis of $(\mathbb{R}^d, g_x)$. From $V_{\bar{u}(x)} = \lspan(\bar{\nu}_u(x)) \oplus D\bar{u}(\mathbb{R}^d)$ we obtain
    \begin{multline*}
        1 = |\bar{\nu}_{u_k}(x)|^2 = |(\bar{\nu}_{u_k}(x), \bar{\nu}_u(x))|^2 + \sum_{i = 1}^r |(\bar{\nu}_{u_k}(x), n_i(\bar{u}(x)))|^2 + \sum_{i = 1}^d |(\bar{\nu}_{u_k}(x), D\bar{u}(x)v_i)|^2 \\
        \implies \lim_{k \to \infty} |(\bar{\nu}_{u_k}(x), \bar{\nu}_u(x))| = 1.
    \end{multline*}
    Since $(D\bar{u}_k(x)v_1, \dots, D\bar{u}_k(x)v_d, \bar{\nu}_{u_k}(x))$ and $(D\bar{u}(x)v_1, \dots, D\bar{u}(x)v_d, \bar{\nu}_u(x))$ are both positively oriented and $D\bar{u}_k(x) \to D\bar{u}(x)$, we get $(\bar{\nu}_{u_k}(x), \bar{\nu}_u(x)) \to 1$. This proves that $|\bar{\nu}_{u_k}(x) - \bar{\nu}_u(x)| \to 0$. As \eqref{eq:generic_convergence} is satisfied for a.e. $x \in \bar{u}^{-1}(U)$, we conclude by the dominated convergence theorem that $\bar{\nu}_{u_k} \to  \bar{\nu}_u$ in $L^p(\bar{u}^{-1}(U))$. Convergence in $L^p(M)$ follows from the arbitrariness of $U$. Furthermore, by covering $M$ with finitely many charts and applying Proposition \ref{pr:derivative_normal_estimate} locally, we deduce
    \[
    \limsup_{k \to \infty} \int_M |D\bar{\nu}_{u_k}|_{g, \e_D}^p \, d\vol_g \le \limsup_{k \to \infty} C \mathcal{E}(u_k) < \infty.
    \]
    Hence, $\bar{\nu}_u \in W^{1, p}(M; \mathbb{R}^D)$ and $D\bar{\nu}_{u_k} \rightharpoonup D\bar{\nu}_u$ in $L^p$.

    To conclude the proof, assume $\lim_{k \to \infty} E_b^S(u_k) = 0$. Let $W \subset M$ be open and let $v_i \in C^\infty(W; TM)$ for $i = 1, \dots, d$ such that $(v_1(x), \dots, v_d(x))$ is a basis of $T_xM$, that is, $(v_1, \dots, v_d)$ is a frame in $W$. We show that $D\bar{u}(Sv_i) = D\bar{u}(S_uv_i)$ a.e. in $W$. By Hölder's inequality, we have $D\bar{u}_k \circ S \to  D\bar{u} \circ S$ in $L^1$. Thus, the hypothesis
    \[
    \lim_{k \to \infty} \int_M |D\bar{u}_k \circ (S_{u_k} - S)|_{g, \e_D}^p \, d\vol_g = 0
    \]
    yields $P_{u_k} \circ D\bar{\nu}_{u_k} = D\bar{u}_k \circ S_{u_k} \to D\bar{u} \circ S$ in $L^1$. Let $\eta \in C_c^\infty(W; \mathbb{R}^D)$ be an arbitrary vector field. Then
    \begin{equation} \label{eq:first_convergence}
        \lim_{k \to \infty} \int_W (P_{u_k}(D\bar{\nu}_{u_k}v_i), \eta) \, d\vol_g = \int_W (D\bar{u}(Sv_i), \eta) \, d\vol_g
    \end{equation}
    On the other hand,
    \[
    \int_W (P_{u_k}(D\bar{\nu}_{u_k}v_i), \eta) \, d\vol_g = \int_W (D\bar{\nu}_{u_k}v_i, P_{u_k}\eta) \, d\vol_g.
    \]
    Since $\bar{u}_k \to \bar{u}$ pointwise a.e. in $M$, $P_{u_k}\eta \to P_u\eta$ in $L^q$ by the dominated convergence theorem. Thus, the weak convergence of $D\bar{\nu}_{u_k}$ implies
    \begin{multline*}
        \lim_{k \to \infty} \int_W (P_{u_k}(D\bar{\nu}_{u_k}v_i), \eta) \, d\vol_g = \int_W (D\bar{\nu}_uv_i, P_u\eta) \, d\vol_g \\
        = \int_W (P_u(D\bar{\nu}_uv_i), \eta) \, d\vol_g = \int_W (D\bar{u}(S_uv_i), \eta) \, d\vol_g.
    \end{multline*}
    Hence, from \eqref{eq:first_convergence} and the arbitrariness of $\eta$, we obtain $D\bar{u}(Sv_i) = D\bar{u}(S_uv_i)$ a.e. in $W$. As $D\bar{u}$ is injective a.e. in $W$, we conclude that $S = S_u$ a.e. in $W$. Every point in $M$ has a neighborhood with a frame, so that $S = S_u$ holds a.e. in $M$.
\end{proof}

\section*{Acknowledgements}

This work is funded by the Deutsche Forschungsgemeinschaft (DFG, German Research Foundation) under Germany's Excellence Strategy EXC 2044–390685587, Mathematics Münster: Dynamics–Geometry–Structure. This paper is an outgrowth of my Master's thesis at the University of Bonn. I would like to thank Stefan Müller for proposing the problem and for insightful ideas that shaped the direction of this work. I would also like to thank Kerrek Stinson for helpful discussions.

\bibliography{references}
\bibliographystyle{plain}

\end{document}